\newcommand{\bfi}{\bfseries\itshape}
\newcommand{\rem}[1]{}
\def\thefigure{\thesection.\@arabic\c@figure}
\def\fps@figure{h, t}
\def\thetable{\thesection.\@arabic\c@table}
\def\fps@table{h, t}
\newcommand \al{\alpha}
\newcommand\be{\beta}
\newcommand\de{\delta}
\newcommand\ep{\varepsilon}
\newcommand\et{\eta}
\newcommand\la{\lambda}
\newcommand\rh{\rho}
\newcommand\si{\sigma}
\newcommand\ta{\tau}
\newcommand\ph{\varphi}
\newcommand\ps{\psi}
\newcommand\Ga{\Gamma}
\newcommand\Om{\Omega}
\newcommand\resp{resp.\ }
\newcommand\ie{i.e.\ }
\newcommand\oo{{\infty}}
\renewcommand\o{\circ}
\newcommand\x{\times}
\newcommand\on{\operatorname}
\newcommand\Emb{\on{Emb}}
\newcommand\Vol{\on{Vol}}
\newcommand\Diff{\on{Diff}}
\newcommand\Gr{\on{Gr}}
\newcommand\vol{\on{vol}}
\newcommand\pa{\partial}
\newcommand\dd{{\mathbf d}}
\newcommand\X{\mathfrak X}
\newcommand\V{\mathcal V}
\newcommand\U{\mathcal U}
\newcommand\RR{\mathbb R}
\newenvironment{proof}[1][Proof]{\noindent\textbf{#1.} }{\ \rule{0.5em}{0.5em}}
\def\XXint#1#2#3{{\setbox0=\hbox{$#1{#2#3}{\int}$ }
\vcenter{\hbox{$#2#3$ }}\kern-.5\wd0}}
\begin{document}

\newtheorem{theorem}{Theorem}[section]
\newtheorem{definition}[theorem]{Definition}
\newtheorem{lemma}[theorem]{Lemma}
\newtheorem{remark}[theorem]{Remark}
\newtheorem{proposition}[theorem]{Proposition}
\newtheorem{corollary}[theorem]{Corollary}
\newtheorem{example}[theorem]{Example}

\def\below#1#2{\mathrel{\mathop{#1}\limits_{#2}}}



\title{Principal bundles of embeddings and nonlinear Grassmannians}
\author{Fran\c{c}ois Gay-Balmaz and Cornelia Vizman}

\date{ }
\maketitle
\makeatother
\maketitle

\begin{abstract}
{We present several principal bundles of embeddings of compact manifolds
(with or without boundary) whose base manifolds are nonlinear Grassmannians. We study their infinite dimensional differential manifold structure in the Fr\'echet category. This study is motivated by the occurrence of such objects in the geometric Lagrangian formulation of free boundary continuum mechanics and in the study of the associated infinite dimensional dual pairs structures.}
\end{abstract}

\section{Introduction}

In this paper, we consider several infinite dimensional principal bundles whose total space is a set of embeddings and whose structure group is given by a group of reparametrizing diffeomorphisms. The base space of these principal bundles are certain classes of nonlinear Grassmannians, that is, sets of submanifolds. Such infinite dimensional objects appear naturally, for example, in the geometric Lagrangian and Hamiltonian formulations of free boundary continuum mechanics and the associated processes of reduction by symmetries, see e.g., \cite{LeMaMoRa1986}, \cite{GBMaRa2012}.

The present paper grew out of the necessity of a rigorous differential geometric description
of some infinite dimensional objects that naturally emerged in the study of dual pairs structures related to fluid dynamics,
\cite{GBVi2013} and \cite{GBVi2014}. Let us mention that a dual pair is an important concept in Poisson geometry. For example, it allows to obtain, in some cases, additional informations about symplectic reduced spaces, see e.g.,  \cite{BW}.

The paper \cite{GBVi2013} presents a dual pair structure for free boundary fluids. This dual pair is defined on the cotangent bundle $T^* \operatorname{Emb}_{\vol}(S,M)$ of the manifold $ \operatorname{Emb}_{\vol}(S,M)$ of all embeddings of a compact volume manifold $(S,\mu)$ with boundary $\pa S$ into a volume manifold $(M,\mu_M)$ without boundary, both having the same dimension. Its study is based on the existence of a Fr\'echet differential manifold structure on the infinite dimensional manifolds involved, such as $ \operatorname{Emb}_{\vol}(S,M)$, $ \operatorname{Emb}_{\vol}(S,M)/ \operatorname{Diff}_{\vol}(S)$, $(T  \operatorname{Emb}_{\vol}(S,M))/ \operatorname{Diff}_{\vol}(S)$. 

The paper \cite{GBVi2014} makes use of the dual pair of momentum maps for ideal fluids found in \cite{MaWe83} to identify and describe prequantizable coadjoint orbits of the group of Hamiltonian diffeomorphisms. This dual pair is defined on the manifold of embeddings $\Emb(S,M)$, where $M$ is a prequantizable symplectic manifold. The result is obtained by applying symplectic reduction with respect to the momentum map associated to the action of the group of volume preserving diffeomorphisms 
$ \operatorname{Diff}_{ \vol }(S)$, and yields the submanifold of the nonlinear Grassmannian 
$\Gr^{S,\mu}(M)$ of volume submanifolds of $M$  of type $(S,\mu)$ that consists of isotropic submanifolds of $M$. Here again, the study is based on the existence of a Fr\'echet differential manifold structure on the principal bundles and nonlinear Grassmannians involved.

\medskip

Our study extends or complements earlier work on the infinite dimensional manifold structures on principal bundles of embeddings, such as e.g., \cite{Ha1982}, \cite{BF}, \cite{KrMi97}, \cite{Molitor08}, \cite{Molitor12}.


\medskip


Below we recall some basic definitions related to calculus on Fr\'echet manifolds, see e.g. \cite{Ha1982}. A more general calculus (the convenient calculus) is developed in \cite{KrMi97}, to deal with manifold modeled on a more general class of topological vector spaces than Fr\'echet spaces. In this paper we will only need the Fr\'echet setting, but since the two approaches coincide in this case, we will use both references.

\paragraph{Review on Fr\'echet manifolds.}
Recall that a \textit{Fr\'echet space} is a topological vector space whose topology arises from a countable collection of seminorms, and is Hausdorff and complete. For example, the space $ \Gamma (V)$ of all smooth (i.e., $C^\infty$) sections of a vector bundle $V \rightarrow S$ over a smooth compact finite dimensional manifold $S$ (possibly with smooth boundary) is a Fr\'echet space. A countable collection of seminorms is obtained by choosing Riemannian metrics and connections on the vector bundles $TS$ and $V$ and defining
\[
\| \sigma \|_n:=\sum_{j=0}^n\sup_{ s \in S} | \nabla ^j \sigma (s)|,\;\;n=0,1,2,...,
\]
where $ \nabla ^j $ denotes the $j^{th}$ covariant derivative of a section $ \sigma \in \Gamma (V)$. In particular, the space $C^\infty(S,V)$ of smooth maps on $S$ with values in a finite dimensional vector space $V$ is a Fr\'echet space.

Let $E,F$ be Fr\'echet spaces and $U$ be an open subset of $E$. A continuous map $f: U \subset E \rightarrow F$ is \textit{differentiable at $x \in U$ in the direction $e$} if the limit
\[
Df(x) \cdot e:=\lim_{h \rightarrow 0} \frac{f(x+he)-f(x)}{h}
\]
exists. We say that the map $f$ is \textit{continuously differentiable} (or $C^1$) if the limit exists for all $ x \in U$ and all $e \in E$ and if the map $ Df:( U \subset E) \times E \rightarrow F$, $(x,e) \mapsto Df(x) \cdot e$ is continuous (jointly as a function on a subset of the product).
Note that if the Fr\'echet spaces $E$ and $F$ turn out to be Banach spaces, this definition of $C^1$ map does not recover the usual definition for Banach spaces since the continuity requirement is weaker.
The concept of smooth ($C^\infty$) maps between Fr\'echet space is defined from the $C^1$ case in the usual iterative way.

A {\it Fr\'echet manifold} modeled on the Fr\'echet space $E$ is a Hausdorff space $M$ covered by charts $(U,\varphi )$ where $U\subseteq M$ is open and $\varphi :U\to \varphi (U)\subseteq E$ is a homeomorphism, such that for any two charts $(U,\varphi )$ and $(U',\varphi ')$, the coordinate changes
$$
\varphi '\o \varphi ^{-1}|_{\varphi (U\cap U')}:\varphi (U\cap U')\to\varphi '(U\cap U')
$$
are smooth maps between Fre\'chet spaces. An example of Fr\'echet manifold is provided by the set $C^\infty(S,M)$ of all smooth maps on a compact finite dimensional manifold $S$ (possibly with smooth boundary) into a finite dimensional manifold $M$ without boundary.
The tangent space at $f$ is the Fr\'echet space $T_f C^\infty(S,M)= \Gamma (f ^\ast TM)$ of all smooth sections of the pull-back vector bundle $f ^\ast TM \rightarrow S$.

A subset $N\subseteq M$ is called a {\it smooth Fr\'echet submanifold} of the Fr\'echet manifold $M$ if there exists a closed subspace $F$ of the Fr\'echet space $E$ and $N$ is covered by charts $(U,\varphi )$ of $M$ such that $\varphi (U\cap N)=\varphi (U)\cap F$. These kinds of charts are called {\it submanifold charts}.
The submanifold $N$ is called a {\it splitting submanifold} if 
the closed subspace $F\subset E$ is complemented, \ie there exists a subspace $G\subset E$
such that the addition map $F\x G\to E$ is a topological isomorphism.

We shall make use of the following regular value theorem \cite{NW} that follows from an implicit function theorem
\cite{Gloeckner}. Let $f:M\to Q$ be a smooth map from a Fr\'echet manifold $M$ into a Banach manifold $Q$. Fix $ q _0 \in Q$ and suppose that for all $m \in M$, with $f(m)= q _0 $, the tangent map $T_mf:T_mM \rightarrow  T_{ q _0 }Q$ is surjective and the subspace $\operatorname{ker}T_mf$ is complemented, i.e., $f$ is a \textit{submersion}. Then $f^{-1}(q_0)$ is a splitting submanifold of $M$ and $T_m ( f ^{-1} ( q _0 ))= \operatorname{ker}(T_mf)$.
The hypotheses on the tangent map are equivalent to the existence, for all $m$ with $f(m)= q _0 $, of a right inverse $ \sigma _m :T_ {q _0 } Q\rightarrow T_mM$ of $T_mf$.  Note that if $Q$ is finite dimensional, then $\operatorname{ker}T_mf$ is necessarily complemented.
\color{black}

Let $G$ be a \textit{Fr\'echet Lie group}, i.e., a Fr\'echet manifold with a group structure such that the group multiplication and the inversion are smooth maps. A principal $G$-bundle in the Fr\'echet category consists of a Fr\'echet manifold $Q$, a (right) action $ G \times Q \rightarrow Q$, and a submersion $ \pi : Q \rightarrow M$, where $M$ is another Fr\'echet manifold, such that for each $m \in M$, we can find a neighborhood $U$ of $m$ and a diffeomorphism $ \Psi  :\pi ^{-1} (U) \rightarrow G \times U$ such that $(i)$ the action of $G$ on $Q$ corresponds to the action on $G \times U$ on the first factor by right translation, and $(ii)$ the projection $ \pi $ corresponds to the projection of $G \times U$ onto the second factor.
 
\paragraph{Manifolds and submanifolds with boundary.} Recall that a \textit{differentiable manifold $S$ with boundary} is defined by requiring that its charts $ \varphi $ are bijections from a subset $U$ of $S$ to an open subset of the
closed half-plane $\mathbb{R}  ^n _+:= \{(x_1, ..., x_n) \in \mathbb{R}  ^n \mid x _n \geq 0\}$, endowed with the topology induced from $ \mathbb{R}  ^n $. Thus, the overlap maps are required to be
diffeomorphisms between open subsets of $\mathbb{R}  ^n _+$. Recall that a map $f : U \rightarrow  V$ between
open subsets $U, V$ of $\mathbb{R}  ^n _+$ is of class $C ^k $ if for each point $x \in  U$ there exist open
neighborhoods $U_1$ of $x$ and $V_1$ of $f(x)$ in $\mathbb{R}  ^n $ and a map $f_1 : U_1 \rightarrow  V_1$ of class $C^k$ such that $f|U \cap U_1 = f_1|U \cap U_1$. We then define $ \mathbf{D} ^i f(x) := \mathbf{D}^i  f_1(x)$, for $x \in  U \cap U_1$ and for all $i = 1, ..., k$. This definition is independent of the choice of $f_1$.

We now recall the definition of a submanifold with boundary (see e.g. \cite[\S4]{H76}). A subset $P\subset \mathbb{R}  ^n $ is a \textit{submanifold of $ \mathbb{R}  ^n$ with boundary} if each point of $P$ belongs to the domain of a chart $\varphi : W \rightarrow \mathbb{R}  ^n $ of $ \mathbb{R}  ^n $ such that
\[
\varphi (W \cap P)= \varphi (W)\cap (\mathbb{R}  ^k_+\times \{0\} )\subset \mathbb{R}  ^n.
\]
Let $M$ be a manifold (with or without boundary). A subset $N \subset M$ is a \textit{submanifold of $M$ with boundary} if each point of $N$ belongs to the domain of a chart $ \varphi :U \rightarrow \mathbb{R}  ^n $ of $M$ such that $ \varphi (U \cap N)$ is a submanifold of $\mathbb{R}^n $ with boundary.

\paragraph{Immersions and embeddings.} 
Let $S$ be a manifold with boundary and $M$ a manifold without boundary. A map $f:S \rightarrow M$ is of \textit{class $C ^k$} if each of its local representations $f_{loc}: U \rightarrow V$ is of class $C ^k $ in the sense recalled above, where $U$ is an open subset of $ \mathbb{R}  ^n _+$ and $V$ is an open subset of $\mathbb{R}  ^m $.
A smooth map $f:S \rightarrow M$ is an \textit{immersion} if its tangent map $T_sf:T_sS \rightarrow T_{f (s) }M$ is injective for all $s$, where we recall that the tangent space to $S$ at $s \in \partial S$ has the same dimension as the tangent spaces at interior points.
A smooth map $f:S \rightarrow M$ is an \textit{embedding} if it is an injective immersion and a homeomorphism onto its image (with the subspace topology). In this case the image $N:=f(S)$ is a smooth submanifold of $M$ with boundary and $ \partial N= f( \partial S)$.
If $S$ is compact then an injective immersion $f:S \rightarrow M$ is necessarily an embedding.

Given a compact manifold $S$ with or without boundary and $M$ a manifold without boundary, such that $ \operatorname{dim}S\leq \operatorname{dim}M$, then the set  $\Emb(S,M)$ of all embeddings is an open submanifold of the manifold of all smooth maps $C^\oo(S,M)$ (see e.g. \cite{Michor} Proposition 5.3).
{Similarly, the group of diffeomorphisms $\Diff(S)$
is an open submanifold of $C^\oo(S,S)$. The group operations are smooth,
so $\Diff(S)$ carries naturally a Lie group structure.}

\paragraph{Nonlinear Grassmannians.} Let $M$ be a smooth finite dimensional manifold without boundary and let $S$ be a finite dimensional compact manifold possibly with boundary. The \textit{nonlinear Grassmannian of submanifolds of $M$ of type $S$} is the set $ \operatorname{Gr}^S(M)$ defined by
\[
 \operatorname{Gr}^S(M):= \{ N \subset M\mid \text{$N$ is a submanifold of $M$ diffeomorphic to $S$}\}.
\] 
Of course, when $S$ has boundary, then $N$ is a submanifold of $M$ with boundary, in the sense recalled above.

Given $N \in \operatorname{Gr}^S(M)$, by definition there exists $f \in \operatorname{Emb}(S,M)$ such that $N=f(S)$. The embedding $f$ is unique up to the composition on the right by a diffeomorphism in $ \operatorname{Diff}(S)$. So we get the bijection
\[
\operatorname{Emb}(S,M)/ \operatorname{Diff}(S) \longleftrightarrow   \operatorname{Gr}^S(M), \quad [f] \mapsto f(S).
\]

Two other nonlinear Grassmannians will emerge naturally in this paper as bases of principal bundles. One is the nonlinear Grassmannian $\operatorname{Gr}^S_0 (M)$ of submanifolds of $M$ diffeomorphic to $S$, 
with the same total volume as $S$. It arises as the quotient space $ \operatorname{Emb}_0(S,M)/ \operatorname{Diff}(S)$, where $ \operatorname{Emb}_0(S,M)$ is the set of all embeddings that preserve the total volume. It also arises as the quotient space $ \operatorname{Emb}_{\vol}(S,M)/ \operatorname{Diff}_{\vol}(S)$,  of volume preserving embeddings by volume preserving diffeomorphisms. Another one is the nonlinear Grassmannian $\operatorname{Gr}^{S, \mu }(M)$ of volume submanifolds of type $(S, \mu )$, where $ \mu $ is a volume form on $S$. It arises as the quotient space $ \operatorname{Emb}(S,M)/ \operatorname{Diff}_{\vol}(S)$.  

\paragraph{Plan of the paper.}

In Section \ref{2.1}, we show that $\operatorname{Emb}(S,M) \rightarrow \operatorname{Gr}^S(M)$ is a smooth Fr\'echet principal $ \operatorname{Diff}(S)$-bundle, for any compact manifold $S$, with $ \operatorname{dim}S\leq \operatorname{dim}M$, with or without boundary. In particular, this shows that $\operatorname{Emb}(S,M)$ and $\operatorname{Gr}^S(M)$ are Fr\'echet manifolds. This extends a result already known in the case $ \partial S=\varnothing$, see \cite{KrMi97}. The main difficulty of the proof is the construction of manifold charts for $\operatorname{Gr}^S(M)$, when $S$ has boundary. In Section \ref{s.totvol}, we show that $\operatorname{Emb}_0 (S,M) \rightarrow \operatorname{Gr}^S_0 (M)$ is a smooth Fr\'echet principal $ \operatorname{Diff}(S)$-bundle. This is done by using the regular value theorem recalled above, valid in the Fr\'echet context. In Section \ref{s2} we show that $ \operatorname{Gr}^{S, \mu }(M)$ is a Fr\'echet manifold by identifying it with an associated bundle to $ \operatorname{Emb}(S,M)$. Using Moser's decomposition of the diffeomorphism group $ \operatorname{Diff}(S)$, we then prove that $\operatorname{Emb}(S,M) \rightarrow \operatorname{Gr}^{S, \mu }(M)$ is a smooth Fr\'echet principal $ \operatorname{Diff}_{\vol}(S)$-bundle.
Finally, in Section \ref{s5}, we show that $\operatorname{Emb}_{\vol}(S,M) \rightarrow \operatorname{Gr}^S_0(M)$ is a smooth Fr\'echet principal $ \operatorname{Diff}_{\vol}(S)$-bundle, by identifying it with a pull-back bundle of $\operatorname{Emb}(S,M) \rightarrow \operatorname{Gr}^{S, \mu }(M)$. In particular, this shows that $\operatorname{Emb}_{\vol}(S,M)$ is a Fr\'echet manifold. This extends a result known for the set of volume preserving embeddings with nowhere vanishing mean curvature, in the particular case $\partial S=\varnothing$, see \cite{Molitor12}.

\smallskip

In this paper, all the finite dimensional manifolds are assumed to be connected and orientable.

\paragraph{Acknowledgements.} This work was partially supported by a grant of the Romanian National Authority for  Scientific  Research, CNCS UEFISCDI, project number PN-II-ID-PCE-2011-3-0921.


\section{Principal $\Diff(S)$-bundle structure on embeddings}\label{2.1}

When the manifold $S$ has no boundary, it is known that $ \operatorname{Emb}(S,M)$ and $ \operatorname{Gr}^S(M)$ are Fr\'echet manifolds and that
\[
\pi:\Emb(S,M)\rightarrow  \Gr^S(M), \quad f \mapsto \pi (f):=f(S)
\]
is a $ \operatorname{Diff}(S)$-principal Fr\'echet bundle. In Section \S\ref{s2.2}, we shall extend this result to  the case when the manifold $S$ has a non empty boundary. Since the proof in that case is considerably more involved, we first sketch below in Section \S\ref{s2.1} the proof in the boundaryless case, following \cite{KrMi97}. 
A slightly different approach to the principal bundle of embeddings
is taken in \cite{Ha1982} and \cite{Molitor08}.

Note that if we take as structure group the connected component $\Diff_+(S)$ of orientation preserving diffeomorphisms, then the base space is the nonlinear Grassmannian of oriented submanifolds of $M$ of type $S$.

\subsection{The boundaryless case}\label{s2.1} 

Assume that $S$ is compact without boundary. We restrict to the case $ \operatorname{dim}S< \operatorname{dim}M$,
otherwise every embedding would be a diffeomorphism.

Let us fix $ f _0 \in \operatorname{Emb}(S,M)$ and $N_0:=f_0(S)$. We endow $M$ with a Riemannian metric $g$, so that the normal vector bundle $TM|_{N_0}/TN_0\rightarrow N_0$ can be identified with the orthogonal subbundle $p:TN_0^{\perp}\subset TM|_{N_0}\rightarrow N _0$.  
A normal tubular neighborhood $U\subset M$ of $N_0$ in $M$ can be built with the exponential map associated to $g$, namely with the diffeomorphism 
\begin{equation}\label{has}
\ta:=\exp_{N_0}:V\subset TN_0^\perp\to U\subset M,
\end{equation}
where $V $ is an open neighborhood of the zero section of $p:TN_0^{\perp}\rightarrow N _0$.
This diffeomorphism coincides with the identity on $N_0$.

We denote the set of $V$-valued sections of the vector bundle $TN_0^\perp$ by
\begin{equation}\label{gamave}
 \Gamma _V( TN_0^\perp ):=\{ \sigma \in \Gamma (TN_0^\perp )\mid \sigma (N_0) \subset V\}.
\end{equation} 
It is an open subset of the Fr\'echet vector space $\Gamma ( TN_0^\perp)$.

\begin{definition} An embedding of $S$ into $M$ built with a normal vector field $\si\in\Ga_{V} (TN_0^\perp)$ via the tubular neighborhood diffeomorphism $\ta$ as
\[
f^\perp:=\ta\o\si\o f_0:S \rightarrow M
\]
will be called a {\bfi normal embedding relative to $N_0$}.
\end{definition}

Note that since $ \sigma $ is a section, it is an injective immersion and hence an embedding since $S$ is compact. Since $ \tau $ and $f_0:S\to N_0$ are
diffeomorphisms, the composition $ \tau \circ \sigma \o f_0:S \rightarrow M$ is an embedding.

\medskip

We denote by $ \mathcal{U} \subset \operatorname{Gr}^S(M)$ the set of all submanifolds $N\in \operatorname{Gr}^S(M)$ obtained via normal embeddings relative to $N_0$, constructed from sections $ \sigma \in \Gamma _V( TN_0^\perp)$.
Given a submanifold $N  \in \mathcal{U} $, there is a unique normal embedding $f^\perp$ with image $N$.
It can be recovered from an arbitrary embedding $f:S\to M$ with $f(S)=N$, by writing $f^\perp=f\o\ps_f^{-1}$, where 
\begin{equation}\label{psif}
\ps_f= f_0^{-1}\o p\o \ta^{-1}\o f\in\Diff(S).
\end{equation}
The embedding $f^\perp$ does not depend on the choice of the embedding $f$, 
because starting with another embedding of $S$ onto $N\subset M$, $f\o\ps$
with $\ps\in\Diff(S)$,
we get the same normal embedding $f^\perp$ since $\ps_{f\o\ps}=\ps_f\o\ps$.

\paragraph{Fr\'echet manifold structure on $\Gr^S(M)$.} A chart around $N_0$ is defined on the subset $\U\subset \operatorname{Gr}^S(M)$ by
\begin{equation}\label{chi0}
\chi : \mathcal{U} \rightarrow \Gamma _V( TN_0^\perp ), \quad \chi(N):=\ta^{-1}\o f^\perp\o f_0^{-1},
\end{equation}
where $f^\perp$ the unique normal embedding relative to $N_0$ such that $f^\perp(S)=N$. The inverse reads $\chi^{-1}(\si)=(\ta\o\si)(N_0)$.

Consider two submanifolds $N _i , N _j \in \operatorname{Gr}^S(M)$, let $\U_i$, \resp $\U_j$, denote the set of all submanifolds of $M$ that are images of normal embeddings relative to $N_i$, \resp $N_j$,
and let $\chi_i$, \resp $\chi_j$, be the corresponding charts.
Then the chart change $\chi_j\o\chi_i^{-1}$ reads
\[
{ \chi _i ( \mathcal{U} _i \cap \mathcal{U} _j )} \subset \Ga(TN_i^\perp) \rightarrow { \chi _j( \mathcal{U} _i \cap \mathcal{U} _j )}  \subset \Ga(TN_j^\perp),
\quad \sigma \mapsto \ta_j^{-1}\o(\ta_i\o\si\o f_0^{-1})^{\perp_j}\o f_0^{-1},
\]
where $f^{\perp_j}$ denotes the normal embedding relative to $N_j$
associated to an embedding $f$.
Notice that the embedding  $\ta_i\o\si\o f_0^{-1}\in\Emb(S,M)$, 
with image $\chi_i^{-1}(\si)$, is a normal embedding 
relative to $N_i$, but not necessarily relative to $N_j$.

The fact that the chart changes are smooth maps between Fr\'echet spaces follows from the smoothness of the composition and inversion maps, see e.g. Corollary 3.13 and Theorem 43.1 in \cite{KrMi97}. Indeed, the map $f\in \operatorname{Emb}(S,M) \mapsto \ps_f=f_0^{-1}\o p\o\ta^{-1}\o f\in \operatorname{Diff}(S) $ is smooth, so the map $f\in \operatorname{Emb}(S,M)\mapsto f^\perp=f\o\ps_f^{-1}\in \operatorname{Emb}(S,M)$ is also smooth, which shows that $\si\mapsto \ta_j^{-1}\o(\ta_i\o\si\o f_0^{-1})^{\perp_j}\o f_0^{-1}$ is smooth.

This proves that $ \operatorname{Gr}^S(M)$ is a smooth Fr\'echet manifold. The connected component of an element $N \in   \operatorname{Gr}^S(M)$ is modeled on the Fr\'echet space $\Gamma (TN^\perp)$ of all smooth sections of the vector bundle $ TN^ \perp\rightarrow N$.
\color{black} 

Note that, while a Riemannian metric is involved in the construction of charts, it is not needed to write the tangent space to $ \operatorname{Gr}^S(M)$ at $N$, since we can make the identification $T_N \operatorname{Gr}^S(M)=\Gamma (TM|_N/TN)$.

\paragraph{Principal $\operatorname{Diff}(S)$-bundle structure on $\Emb(S,M)$.} {We now make use of the Fr\'echet manifold structures on  $ \operatorname{Diff}(S)$ and $\Gr^S(M)$ to show that $\pi:\Emb(S,M)\to\Gr^S(M)$, $\pi(f)=f(S)$ is a principal $\Diff(S)$-bundle.}

The local trivializations are built over the open sets $\U$
of all submanifolds of $M$ that are images of normal embeddings relative to $N_0$. They are given by
\[
\Psi:  \pi^{-1}(\mathcal{U}) \rightarrow \mathcal{U}\x\Diff(S), \quad 
\Psi(f):=( f(S) ,  \psi_f),
\]
where $ \psi_f= p \circ \ta^{-1} \circ f$ as above. The inverse is given by $\Psi ^{-1} ( N , \psi)= f^\perp \circ\psi$,
where $f^\perp$ is the unique normal embedding relative to $N_0$ such that $f^\perp(S)=N$.
Using the equality $( f^{\perp_i}) ^{-1} = (p _i \circ \tau _i ^{-1}) |_{N}$, we obtain that the transition functions are given by
\[
\psi_{ij}:\U_i\cap\U_j\to\Diff(S),\quad\psi_{ij}(N)=(f^{\perp_i})^{-1}\o f^{\perp_j}.
\]
and are smooth maps.


Note that the tangent map to the projection $ \pi : \operatorname{Emb}(S,M) \rightarrow \operatorname{Gr}^S(M)$ sends a vector field $v_f\in\Ga(f^*TM)$ to the orthogonal projection $(v_f\circ f ^{-1} )^\perp\in\Ga(TN^\perp)$, where $N=f(S)$ (see \cite{Molitor08} Corollary 2.3).
The vertical space $V_f \operatorname{Emb}(S,M)$ at $f$ thus consists of vector fields $ v _f \in \Gamma ( f ^\ast TN)$ such that $v _f (s) \in T_{f(s)}N$.
As before, a metric is not needed to write the tangent map, since we can write $T_f \pi (  v_f )=[v _f \circ f ^{-1} ]\in \Gamma (TM/TN)$.

\subsection{The case $ \partial S\ne \varnothing$}
\label{s2.2} 

In this section we assume that $S$ is compact with boundary and that $\dim S\le\dim M$. Note that, contrary to the boundaryless case, the situation $ \operatorname{dim}S=\operatorname{dim}M$ is interesting.

The construction of manifold charts for the nonlinear Grassmannian $ \operatorname{Gr}^S(M)$ when $S$ has a non empty boundary is considerably involved. 
We thus start with two instructive examples, before showing the result in general.

\subsubsection{The principal bundle $ \operatorname{Emb}([0,1], \mathbb{R}  ^2 )\rightarrow \operatorname{Gr}^{[0,1]}( \mathbb{R}  ^2 )$}

As an illustrative example and to introduce our notations, we consider the case $S=[0,1]$, $M= \mathbb{R}  ^2 $, with $ \partial S=\{0,1\}$. The construction will be made precise in \S\ref{gen_case} below.

We consider the embedding $f_0:[0,1] \rightarrow \mathbb{R}  ^2 $ given by $ f_0(s)=(s,0)$. Intuitively, elements of $ \operatorname{Gr}^{[0,1]}( \mathbb{R}  ^2 )$ in a neighborhood of $N_0=[0,1]\times \{0\}$ can be obtained by first deforming $N _0$ in a $y$-direction and then in a $x$-direction in $ \mathbb{R}  ^2 $. Therefore a normal embedding associated to $[0,1]$ should be uniquely determined from two numbers $ \sigma ^\dagger=( \sigma _0 ^\dagger, \sigma _1^\dagger) \in (- \varepsilon/2 , \varepsilon/2 ) ^2 $ giving the $x$-deformation, and from a function $ \sigma \in C^\infty([0,1], (-\delta , \delta ))$ giving the $y$-deformation.

To $ \sigma ^\dagger$ we associate an embedding $ \phi _{ \sigma ^\dagger}: N_0 \rightarrow \mathbb{R}  ^2 $ given by $ \phi _{ \sigma ^\dagger}(x,0)=((1+ \sigma _1 ^\dagger- \sigma _0 ^\dagger)x+ \sigma _0 ^\dagger, 0)$, sending $N_0= [0,1] \times \{0\}$ to $N_1=[\sigma _0 ^\dagger, 1+\sigma _1 ^\dagger]\times \{0\}\subset (- \varepsilon/2 , 1+ \varepsilon/2 ) \times \{0\}$. From $ \phi _{ \sigma ^\dagger}$ we can build the diffeomorphism $H_{ \sigma ^\dagger}:[0,1] \times \mathbb{R}  \rightarrow [\sigma _0 ^\dagger, 1+\sigma _1 ^\dagger]\times \mathbb{R}  $ defined by $H_{ \sigma ^\dagger}(x,y):= ((1+ \sigma _1 ^\dagger- \sigma _0 ^\dagger)x+ \sigma _0 ^\dagger, y)$ associated to $x$-deformation.
To the function $ \sigma $ is naturally associated the embedding, also denoted $ \sigma $, given by $ \sigma (x,0)=( x, \sigma (x))$
associated to $y$-deformation.
The normal embedding is therefore $f^\perp(s):= H_{ \sigma ^\dagger}( \sigma ( f _0 (s)))=((1+ \sigma _1 ^\dagger- \sigma _0 ^\dagger)s+ \sigma _0 ^\dagger, \sigma (s))$. Such embeddings can be used to construct charts for $\operatorname{Gr}^{[0,1]}( \mathbb{R}  ^2 )$, thereby proving that $ \operatorname{Gr}^{[0,1]}( \mathbb{R}  ^2 )$ is a Fr\'echet manifold modeled on the Fr\'echet vector space $ \mathbb{R}  ^2 \times C^\infty([0,1], \mathbb{R}  )$.

\subsubsection{The principal bundle $ \operatorname{Emb}\left( \overline{ \mathbb{D}  }, \mathbb{R}  ^3 \right) \rightarrow \operatorname{Gr}^{\overline{ \mathbb{D}  }}( \mathbb{R}  ^3 )$} 

We now consider the case of the closed unit disk $\overline{ \mathbb{D}}=\{(x, y) \in \mathbb{R}  ^2 : x ^2 + y ^2 \leq 1\}$, with $M= \mathbb{R}  ^3 $ and $ \partial \overline{ \mathbb{D}  }=S ^1 $.

We consider the embedding $f_0:\overline{ \mathbb{D}  }\rightarrow \mathbb{R}  ^3 $ given by $ f_0(x,y)=(x,y,0)$. Intuitively, elements of $ \operatorname{Gr}^{\overline{ \mathbb{D}  }}( \mathbb{R}  ^3 )$ in a neighborhood of $N_0=\overline{ \mathbb{D}  }\times \{0\}$ can be obtained by first deforming $N _0$ in a $z$-direction and then in a $(x,y)$-direction perpendicular to the boundary. Therefore a normal embedding associated to $\overline{ \mathbb{D}  }$ should be uniquely determined from two functions $ \sigma ^\dagger\in C^\infty(S ^1 , (- \varepsilon/2 , \varepsilon/2 ) ^2 )$ and  $ \sigma \in C^\infty(\overline{ \mathbb{D}  }, (-\delta , \delta ))$.

Let us use polar coordinates $(r, \theta )$ in the $(x,y)$ plane and fix a smooth function $\rho:\RR\to\RR$ which vanishes in a neighborhood of $(-\infty,-1]$, with $\rho(0)=1$ and bounded derivative: $0\le\rho'(t)\le 2$ for all $t\in\RR$. To the function $ \sigma ^\dagger$ we associate an embedding $ \phi _{ \sigma ^\dagger}: N_0 \rightarrow \mathbb{R}  ^3 $ given by $ \phi _{ \sigma ^\dagger}(r, \theta ,0)=(r+ \sigma ^\dagger( \theta ) \rho ((r-1)/ \varepsilon ), \theta , 0)$, which deforms the disk $N_0$ in a $(x,y)$-direction perpendicular to the boundary. This results in the manifold $N_1 \subset \overline{\mathbb{D}}_ \varepsilon  \times \{0\}$ with boundary $ \partial N _1 =\{(1+ \sigma ^\dagger (\theta ), \theta,0 ): \theta \in [0,2 \pi ]\}\subset S ^1 \times (- \varepsilon , \varepsilon ) \times \{0\}$. From $ \phi _{ \sigma ^\dagger}$, we build the embedding $H_{ \sigma ^\dagger}: \overline{\mathbb{D}} \times \mathbb{R}  \rightarrow N _1  \times \mathbb{R}  $ defined by $H_{ \sigma ^\dagger}(r, \theta , z):= ( \phi _{ \sigma ^\dagger}(r, \theta ),z)$.

To the function $ \sigma $ is naturally associated the embedding, also denoted $ \sigma $, given by $ \sigma (r, \theta ,0)=(r, \theta ,\sigma (r, \theta ))$
associated to $z$-deformations.
The normal embedding is therefore $f^\perp(r, \theta ):= H_{ \sigma ^\dagger}( \sigma ( f _0 (r, \theta )))= ( \phi _{ \sigma ^\dagger}(r, \theta ),\sigma ( r, \theta ))$. Such embeddings can be used to construct charts for $\operatorname{Gr}^{\overline{ \mathbb{D}  }}( \mathbb{R}  ^3 )$, thereby proving that $\operatorname{Gr}^{\overline{ \mathbb{D}  }}( \mathbb{R}  ^3 )$ is a Fr\'echet manifold modeled on the Fr\'echet vector space $C^\infty(S ^1 , \mathbb{R}  )\times C^\infty(\overline{ \mathbb{D}  }, \mathbb{R}  )$.

\subsubsection{The general case}\label{gen_case}

Given $N_0=f_0(S)$ a submanifold of $M$ of type $S$
(compact with boundary), we choose a Riemannian metric $g$ on $M$  
such that $N_0$ is a totally geodesic submanifold of $M$.
We consider the normal bundle $TN_0^\perp$ over $N_0$
and the line bundle 
$$
T(\pa N_0)^\dagger:=TN_0|_{\pa N_0}\cap T(\pa N_0)^\perp
$$
over $\pa N_0$. 
We choose open neighborhoods  of zero sections
\[
V=\{v\in TN_0^\perp:|v|<\de\} \quad\text{and}\quad V^\dagger=\{v\in T(\pa N_0)^\dagger:|v|<\ep/2\}
\]
such that the Riemannian exponential map induces diffeomorphisms
\begin{equation}\label{tau}
\ta:V\subset TN_0^\perp\to U\subset M, 
\end{equation}
and
\begin{equation}\label{tautau}
\ta^\dagger:\{v\in T(\pa N_0)^\dagger:|v|<\ep\}\subset T(\pa N_0)^\dagger\to U^\dagger\subset M.
\end{equation}
{Note that $U$ contains $N_0$ but is not a neighborhood of $N_0$ in $M$ because $N_0$ has boundary.
Note also that, since $N_0$ is totally geodesic, $U^\dagger\cap N_0$ is an open set in $N_0$,
a so called collar neighborhood.
This is crucial for the following construction.}

We will build in a canonical way normal embeddings relative to $N_0$ from given sections $ \sigma^\dagger \in \Gamma _{V^\dagger}( T(\partial N_0)^\dagger) $
and $ \sigma \in \Gamma _V( TN_0^\perp)$,
where $\Gamma _{V^\dagger}( T(\partial N_0)^\dagger)$ and $\Gamma _V( TN_0^\perp)$ denote open subsets of the corresponding Fr\'echet spaces 
$\Gamma ( T(\partial N_0)^\dagger)$ and $\Gamma( TN_0^\perp)$
as in \eqref{gamave}. This is done in several steps.
 
\textbf{Step I.} We extend $N_0$ through its boundary 
to a boundaryless submanifold $N^{ext}_0$ of $M$,
having the same dimension as $N_0$.
This is done by adding to $N_0$ geodesic segments 
starting at $\pa N_0$ in orthogonal direction to $\partial N_0$.
More precisely, $N^{ext}_0=N_0\cup U^\dagger$, where $U^\dagger=\exp(\{v\in T(\pa N_0)^\dagger:|v|<\ep\})$.
{Here we make use of the fact that $N_0$ is totally geodesic}. 
Thus $ \tau ^\dagger$ from \eqref{tautau} is a tubular neighborhood of $\pa N_0$
in $N_0^{ext}$.
We extend now the diffeomorphism $\ta$ from \eqref{tau} 
via the exponential map to a diffeomorphism 
\begin{equation}\label{diffeo_tube} 
\ta^{ext}: V^{ext}\subset(TN^{ext}_0)^\perp 
\rightarrow U^{ext} \subset M.
\end{equation}

\textbf{Step II.} The image of the embedding $\tau^\dagger\o\sigma^\dagger: {\partial N_0 \rightarrow U^\dagger\subset N_0^{ext}}$
is the boundary $\pa N_1$ of a submanifold of $N_1\subset N_0^{ext}$
with $\dim N_0=\dim N_1$.
We will extend this embedding to an embedding $\phi_{\si^\dagger}:N_0\to N_0^{ext}$
with image  $N_1$, 
which modifies $1_{N_0}$ only in the collar neighborhood 
$U^\dagger\cap N_0$.
This is done with the help of a smooth function
$\rho:\RR\to\RR$ which vanishes in a neighborhood of $(-\infty,-1]$,
with $\rho(0)=1$ and bounded derivative: $0\le\rho'(t)\le 2$ for all $t\in\RR$.
The function $\rho$ is chosen such that,
for any $a\in(-\ep/2,\ep/2)$, 
the function $\la_a:t\in[-\ep,0]\mapsto t+a\rho(t/\ep)\in[-\ep,a]$ 
is a monotonically increasing bijection. 
Indeed, $\la_a(-\ep)=-\ep$, $\la_a(0)=a$, and $\la_a'(t)=1+a/\ep\rh'(t/\ep)>1-1/2\rh'(t/\ep)>0$.

The line bundle $T(\pa N_0)^\dagger$
is the normal bundle of $\pa N_0$ in $N_0^{ext}$.
It is a trivial bundle with trivializing isomorphism simply given by 
$T (\partial N_0)^\dagger \rightarrow \partial N_0 \times \mathbb{R}  $, $v_x \mapsto (x,g(v_x, n))$, where $n$ is the unit outward pointing normal vector field to $ \partial N_0$ in $N_0^{ext}$.
Thus we can identify the open neighborhood $V^\dagger$ of $\pa N_0$ with $\pa N_0\x(-\ep/2,\ep/2)$
and the section $\si^\dagger$ with a function $\si^\dagger:\pa N_0\to(-\ep/2,\ep/2)$.
Now, with the help of the embedding 
$$
h:\pa N_0\x(-\ep,0]\to\pa N_0\x(-\ep,\ep),
\quad h(x,t)=(x,t+\si^\dagger(x)\rho(t/\ep)),
$$ 
we can build an embedding $\phi_{\si^\dagger}:N_0\to N_0^{ext}$ with image $N_1$ by 
$$\phi_{\si^\dagger}|_{U^\dagger\cap N_0}=\ta^\dagger\o h\o(\ta^\dagger)^{-1}\text{ and }\phi_{\si^\dagger}|_{N_0-U^\dagger}=1|_{N_0-U^\dagger}.$$

\textbf{Step III.} We use parallel transport along geodesic segments 
starting at $\pa N_0$ in orthogonal direction to $\partial N_0$
to build a bundle isomorphism $H_{\si^\dagger}$ from $TN_0^\perp$ 
to $TN_1^\perp$ over the embedding $\phi_{\si^\dagger}:N_0\to N_0^{ext}$.
The map $H_{\si^\dagger}$ restricted to the fibers over $N_0-U^\dagger$ is the identity,
while its restriction to the fiber over $y\in U^\dagger\cap N_0$ is the parallel transport along the
geodesic segment that connects $y$ and $\phi_{\si^\dagger}(y)$.
We remark that parallel transport along geodesics 
preserves the norm and maps orthogonal vectors into orthogonal vectors.
{In particular $H_{\si^\dagger}$ is uniquely determined by an embedding of the (unit) sphere bundle
$S(TN_0^\perp)$ to the unit sphere bundle $S((TN_0^{ext})^\perp)$,
also denoted by $H_{\si^\dagger}$.}

\textbf{Step IV.} The {\bfi normal embedding relative to $N_0$}, associated to 
the two sections $ \si^\dagger \in \Gamma_{V^\dagger} ( T(\partial N_0)^\dagger)$ 
and $ \sigma \in \Gamma_V (TN_0^\perp)$, is then defined by
\begin{equation}\label{normal_embedding_boundary} 
f^\perp:= \tau^{ext} \circ H_{ \si^\dagger } \circ \sigma\o f_0\in\Emb(S,M) .
\end{equation} 
This means that we first lift $N_0=f_0(S)$ in a perpendicular direction by using $ \sigma $ and get a manifold  whose boundary is exactly sitting above the boundary of $ N_0$. 
Then we apply the diffeomorphism $H_{\si^\dagger}$,
which stretches a collar neighborhood of the boundary
of the new manifold in a parallel direction 
to $N_0$ according to $\si^\dagger $.

\medskip

Let us denote by $ \mathcal{U} $ the set of all submanifolds obtained by $N=f^\perp(S)$
for normal embeddings relative to $N_0$, associated to 
$ \si^\dagger \in \Gamma_{V^\dagger} ( T(\partial N_0)^\dagger)$ 
and $ \sigma \in \Gamma_V (TN_0^\perp)$. 
We now show that, given $N \in \mathcal{U} $, there is a unique normal embedding
relative to $N_0$ whose image is $N$. We first project the boundary of $N$ down to $N^{ext}_0$ using the projection $(TN_0^{ext})^\perp\rightarrow N_0^{ext}$. This yields 
a hypersurface $ \Sigma $ in $N_0^{ext}$ that is contained in $U^\dagger$.
Because $N\in\U$,  $\Sigma$ determines a unique section $ \sigma^\dagger \in \Gamma_{V^\dagger} ( T(\partial N_0)^\dagger)$ such that $\tau^\dagger \circ \si^\dagger(N_0)  =\Sigma $. 
By construction of $H_{\si^\dagger}$, 
the manifold $H_{\si^\dagger} ^{-1} ( N)$  sits exactly above $N_0$: its boundary sits exactly above the boundary of $ N_0$, 
and $ H_{\si^\dagger} $ sends  fibers above $ \partial N_0$ to fibers above $ \Sigma $. 
Now, there is a unique $ \sigma \in\Ga_V(TN_0^\perp)$ that lifts $N_0$  to this manifold $H_{\si^\dagger} ^{-1} ( N)$ above $N_0$.

\medskip

Fr\'echet manifold charts for $ \operatorname{Gr}^S(M)$ can now be built in a similar way as in \S\ref{2.1}. Indeed, a chart around $N_0$ is defined on the subset $\U\subset \operatorname{Gr}^S(M)$ by
\[
\chi : \mathcal{U} \rightarrow \Gamma_{V^\dagger} (T( \partial N_0)^\dagger) \oplus \Gamma_V (TN_0^\perp), \quad \chi(N):=( \sigma ^\dagger, \sigma ),
\]
where $ ( \sigma ^\dagger, \sigma )$ are the unique sections such that the corresponding normal embedding in \eqref{normal_embedding_boundary} verifies $f^\perp(S)=N_0$. 
The inverse is $\chi^{-1}( \sigma ^\dagger, \sigma )=f^\perp(S)$.

To prove that the change of charts $\chi_j\o\chi_i^{-1}$ are smooth, we decompose it in a number of steps that consist in smooth mappings. The two charts are defined with submanifolds $N_i=f_i(S)$, $N_j=f_j(S)$, 
and Riemannian metrics $g_i,g_j$ on $M$ that induce diffeomorphisms 
$\ta_i^{ext},\ta_j^{ext},\ta_i^\dagger,\ta_j^\dagger$ via exponential maps.
For $(\si^{\dagger} ,\si ) \in \Gamma_{V_i ^\dagger} (T( \partial N_i)^\dagger) \oplus \Gamma_{V_i } (TN_i^\perp)$, these steps are
\begin{align*}
\si ^\dagger&\mapsto H _{i,\si ^\dagger}\in\Emb(S(TN_i^\perp),S((TN_i^{ext})^\perp)),\\
(\si^{\dagger},\si )&\mapsto f^{\perp_i }(\si^{\dagger},\si)=\ta_i^{ext}\o 
H_{i,\si^\dagger}\o\si\o f_i\in\Emb(S,M),\\
f^{\perp_i }&\mapsto E_j (f^{\perp_i}):= (\ta_j^{ext})^{-1}\o f^{\perp_i}\o f_j^{-1}\in\Emb(N_j,(TN_j^{ext})^\perp),\\
(\si^\dagger,\si)&\mapsto (\si^\dagger_{new}, \si_{new})=\left( 
(\ta_j^\dagger)^{-1}\o p_{N_j^{ext}}\o E_j |_{\pa N_j},H_{j,\si^\dagger_{new}}^{-1}\o E_j )\right),
\end{align*} 
where $E_j =E_j (f^{\perp_i }(\si^{\dagger},\si))$ and $ p_{N_j^{ext}}:(TN_j^{ext})^\perp\to N_j^{ext}$.

The principal $\Diff(S)$-bundle structure on $ \pi:\operatorname{Emb}(S,M)\to\Gr^S(M)$ is now constructed as in the case of a manifold $S$ without boundary in \S\ref{s2.1}.

We have thus proved the following result.

\begin{theorem}\label{thm_general}  Let $S$ be a smooth compact manifold with smooth boundary and let $M$ be a finite dimensional manifold without boundary. Suppose that $ \operatorname{dim}S\leq \operatorname{dim}M$. Then the nonlinear Grassmannian $ \operatorname{Gr}^S(M)$ is a smooth Fr\'echet manifold. {The connected component of $N \in  \operatorname{Gr}^S(M)$ is modeled on the Fr\'echet vector space $\Gamma (T( \partial N)^\dagger) \oplus \Gamma (TN^\perp)$}.

Moreover the projection
\begin{equation}\label{proj_general} 
\pi :\operatorname{Emb}(S,M) \rightarrow\operatorname{Gr}^S(M), \quad f \mapsto f(S)
\end{equation} 
is a Fr\'echet principal $ \operatorname{Diff}(S)$-bundle. 
\end{theorem}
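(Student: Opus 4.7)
The plan is to formalize the construction already sketched in Section \ref{gen_case}, carrying out the two parts in turn: first establish a Fréchet atlas on $\operatorname{Gr}^S(M)$, then use that atlas to produce principal bundle trivializations of $\pi$. Throughout, I would treat each connected component of $\operatorname{Gr}^S(M)$ separately so that all charts are modeled on the same Fréchet space $\Gamma(T(\pa N_0)^\dagger) \oplus \Gamma(TN_0^\perp)$.

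For the manifold structure, I would fix, for each $N_0 \in \operatorname{Gr}^S(M)$, a Riemannian metric $g$ on $M$ for which $N_0$ is totally geodesic (such metrics exist by a standard construction extending any metric on $N_0$ using a tubular neighborhood). Steps I--IV of Section \ref{gen_case} then define the chart
\[
\chi:\U\to\Gamma_{V^\dagger}(T(\pa N_0)^\dagger)\oplus\Gamma_V(TN_0^\perp),\qquad \chi(N)=(\si^\dagger,\si),
\]
where $(\si^\dagger,\si)$ is the pair producing the unique normal embedding $f^\perp$ relative to $N_0$ with image $N$. The bijectivity of $\chi$ is the crucial structural point and I would verify it using the argument already sketched: the projection of $\pa N$ onto $N_0^{ext}$ along the normal fibers of $TN_0^{ext}$ is contained in the collar $U^\dagger\cap N_0^{ext}$ and uniquely determines $\si^\dagger$ via $\ta^\dagger$; once $H_{\si^\dagger}$ is built, $H_{\si^\dagger}^{-1}(N)$ sits over $N_0$ as the graph of a unique section $\si\in\Gamma_V(TN_0^\perp)$.

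Smoothness of the transition maps $\chi_j\circ\chi_i^{-1}$ follows by decomposing them into the four explicit steps listed at the end of Section \ref{gen_case}. Each step is a smooth mapping between (open subsets of) Fréchet spaces: the dependence $\si^\dagger\mapsto H_{i,\si^\dagger}$ is smooth because parallel transport along the smoothly varying geodesics $t\mapsto\ta_i^\dagger(t\si^\dagger(\cdot))$ depends smoothly on $\si^\dagger$; the remaining steps are compositions, inversions, and pullbacks of smooth maps on the Fréchet manifolds $C^\infty(S,M)$, $\operatorname{Emb}(S,M)$, and $\operatorname{Diff}(S)$, which are smooth by Corollary 3.13 and Theorem 43.1 of \cite{KrMi97}. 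The tangent space formula $T_N\operatorname{Gr}^S(M)\simeq\Gamma(T(\pa N)^\dagger)\oplus\Gamma(TN^\perp)$ is a direct consequence of the model space of $\chi$.

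For the principal bundle structure, over each open set $\U$ I would define
\[
\Psi:\pi^{-1}(\U)\to\U\x\Diff(S),\qquad \Psi(f)=(f(S),\ps_f),\qquad \ps_f:=(f^\perp)^{-1}\o f,
\]
where $f^\perp$ is the unique normal embedding relative to $N_0$ with $f^\perp(S)=f(S)$; its inverse is $\Psi^{-1}(N,\ps)=f^\perp\o\ps$ with $f^\perp$ reconstructed from $\chi(N)$. The assignment $f\mapsto f^\perp$ is smooth because it is the composition $f\mapsto f(S)=:N\mapsto\chi(N)=:(\si^\dagger,\si)\mapsto\ta^{ext}\o H_{\si^\dagger}\o\si\o f_0$ of smooth maps already analyzed in the previous paragraph, so both $\Psi$ and $\Psi^{-1}$ are smooth. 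Equivariance $\ps_{f\o\ph}=\ps_f\o\ph$ for $\ph\in\Diff(S)$ is immediate, since $(f\o\ph)^\perp=f^\perp$ depends only on the image. The transition cocycle $\ps_{ij}(N)=(f^{\perp_i})^{-1}\o f^{\perp_j}$ is smooth by the same token. The main obstacle in the whole argument is the bijectivity of $\chi$; once that geometric statement is secured, smoothness is a formal consequence of the calculus of smooth maps between Fréchet spaces.
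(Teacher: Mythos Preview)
Your proposal is correct and follows essentially the same approach as the paper: you build the charts via the normal-embedding construction of Steps I--IV in Section~\ref{gen_case}, verify smoothness of transitions through the four-step decomposition listed there, and then carry the boundaryless local-trivialization argument of \S\ref{s2.1} over verbatim using $\psi_f=(f^\perp)^{-1}\circ f$. The only cosmetic difference is that you phrase the smoothness of $f\mapsto f^\perp$ as a factorization through $N=f(S)$, whereas the paper's four-step list expresses the same computation directly in terms of $f$ without naming $N$; both amount to the same composition of smooth operations.
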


\paragraph{The tangent map.}
A tangent vector $v_f\in T_f\Emb(S,M)=\Ga(f^*TM)$ 
defines a unique section $v_N:=v _f \circ f ^{-1} \in\Ga(TM|_N)$ for $N=f(S)$. Using this notation, the tangent map to the projection $ \pi $ reads $T_f\pi(v _f )= (v_N^\dagger,v_N^\perp) \in T_N \operatorname{Gr}^S(M)=\Gamma (T( \partial N)^\dagger) \oplus \Gamma (TN^\perp)$,
where $v_N^\dagger$ is the orthogonal projection of $v_N|_{\pa N}$
on the line bundle $T(\pa N)^\dagger\subset TN|_{\pa N}$ 
orthogonal to the boundary $\pa N$, and $v_N^\perp$ is the orthogonal projection
of $v_N\in TM|_{N}$ on the orthogonal bundle $TN^\perp$. Note that by using the metric $g$, we have the identification $\Gamma (T( \partial N)^\dagger)= C^\infty( \partial N, \mathbb{R}  )$ and the first term can be written as $g(v_N|_{ \partial N}, n)$, where $n$ is the unit normal outward pointing vector field to $ \partial N$.

The first component vanishes when $S$ has no boundary, as seen in \S\ref{s2.1},
while the second component vanishes when $S$ and $M$ have the same dimension,
as we will see in more details in the next paragraph.

Note also that the vertical subspace of $T_f \operatorname{Emb}(S,M)$ reads
\[
V_f \operatorname{Emb}(S,M)=\{Tf \circ  u : u \in \mathfrak{X}_{\|}(S)\}= \{ v \circ f : v \in \mathfrak{X}  _\|( f(S))\},
\]
where $ \mathfrak{X}  _\|(S)$ denotes the space of smooth vector fields on $S$ parallel to the boundary. In the equality above, we defined $v:= f _\ast u$. Since $u$ is an arbitrary vector field on $S$ parallel to the boundary, $v$ is an arbitrary vector field on $f(S)$ parallel to the boundary.

{We consistently have an isomorphism $T_f \operatorname{Emb}(S,M)/V_f \operatorname{Emb}(S,M)\simeq T_N \operatorname{Gr}^S(M)$, for $N=f(S)$.}

A metric is not needed to identify the tangent space, since we can make the identification $T_N \operatorname{Gr}^S(M)= \Gamma (TN|_{\pa N}/T \partial N) \times \Gamma (TM|_N/TN)$, in which case we can write $T_f \pi (v _f )= \left( [ v _f \circ f ^{-1} |_{\partial N}],[v _f \circ f ^{-1}]\right) $.

\paragraph{Embeddings of manifolds of the same dimension.}
In the special case $\dim S=\dim M$
parts of the construction of a normal embedding relative to $N_0$
simplify because the normal bundle over $N_0$ is zero. 
The normal embedding is associated to a single section,
namely the section  $\si^\dagger$ of the bundle 
$T(\pa N_0)^\dagger= T(\pa N_0)^\perp$ over $\pa N_0$.
The Riemannian metric $g$ on $M$ can be arbitrarily chosen
and the manifold $N_0^{ext}$ that extends $N_0$
is just an open subset of $M$.
The Fr\'echet manifold $\Gr^S(M)$ is modeled on $\Gamma(T( \partial N_0)^\dagger)= \Gamma (T(\pa N_0)^\perp)$. Rewritten in this case, Theorem \ref{thm_general} reads as follows.

\begin{proposition} Let $S$ be a smooth compact manifold with smooth boundary and let $M$ be a finite dimensional manifold without boundary. Suppose that $ \operatorname{dim}S=\operatorname{dim}M$. Then the nonlinear Grassmannian $ \operatorname{Gr}^S(M)$ is a smooth Fr\'echet manifold.  
{The connected component of $N \in  \operatorname{Gr}^S(M)$ is modeled on the Fr\'echet vector space $\Gamma (T( \partial N)^\perp)\simeq C^\infty( \partial N)$}.

Moreover the projection
\begin{equation}\label{proj_S=M} 
\pi :\operatorname{Emb}(S,M) \rightarrow\operatorname{Gr}^S(M), \quad f \mapsto f(S)
\end{equation} 
is a Fr\'echet principal $ \operatorname{Diff}(S)$-bundle. 
\end{proposition}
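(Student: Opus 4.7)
The plan is to specialize the proof of Theorem \ref{thm_general} to the equidimensional case, pointing out how each step degenerates. Since $\dim N_0 = \dim M$, the normal bundle $TN_0^\perp \to N_0$ has rank zero, so every section $\sigma \in \Gamma_V(TN_0^\perp)$ is identically zero and drops out. The only remaining deformation datum is $\sigma^\dagger \in \Gamma_{V^\dagger}(T(\partial N_0)^\dagger)$, and here $T(\partial N_0)^\dagger = T(\partial N_0)^\perp$ is the line subbundle of $TN_0|_{\partial N_0} = TM|_{\partial N_0}$ orthogonal to $T\partial N_0$.

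Concretely, I choose any Riemannian metric $g$ on $M$; since $N_0$ and $M$ have the same dimension, the requirement that $N_0$ be totally geodesic in Step I of \S\ref{gen_case} is vacuous, and the extension $N_0^{ext}$ built by running geodesics outward from $\partial N_0$ is just an open neighborhood of $N_0$ in $M$. Step III becomes trivial, since $H_{\sigma^\dagger}$ is the identity between rank-zero bundles, and the normal embedding \eqref{normal_embedding_boundary} collapses to $f^\perp = \phi_{\sigma^\dagger} \circ f_0$, where $\phi_{\sigma^\dagger}:N_0 \to N_0^{ext}$ is the embedding of Step II built from $\tau^\dagger$ and the cut-off $\rho$, modifying $1_{N_0}$ only on the collar $U^\dagger \cap N_0$.

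Using this simplified family of normal embeddings, I define, around each $N_0 = f_0(S)$, the chart
\[
\chi : \mathcal{U} \to \Gamma_{V^\dagger}(T(\partial N_0)^\dagger), \quad \chi(N) = \sigma^\dagger,
\]
where $\sigma^\dagger$ is recovered from $N \in \mathcal{U}$ by projecting $\partial N \subset N_0^{ext}$ onto $\partial N_0$ via $(\tau^\dagger)^{-1}$. The uniqueness argument of \S\ref{gen_case} applies verbatim, the final $\sigma$-lift being tautological. The smoothness of the chart changes follows from the same composition of smooth maps used in Theorem \ref{thm_general}, omitting the steps involving $\sigma$ and $H_{\sigma^\dagger}$. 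The trivialization $T(\partial N)^\perp \to \partial N \times \mathbb{R}$, $v_x \mapsto g(v_x, n_x)$ with $n$ the unit outward normal to $\partial N$ in $M$, is a Fr\'echet-linear isomorphism identifying $\Gamma(T(\partial N)^\perp)$ with $C^\infty(\partial N)$.

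With these charts in place, the principal $\Diff(S)$-bundle structure on $\pi:\Emb(S,M) \to \Gr^S(M)$ is constructed exactly as in \S\ref{s2.1} and the end of \S\ref{gen_case}: the local trivialization over $\mathcal{U}$ is $\Psi(f) = (f(S), \psi_f)$ with $\psi_f := f_0^{-1} \circ \phi_{\sigma^\dagger}^{-1} \circ (\tau^{ext})^{-1} \circ f$, and the transition cocycle $\psi_{ij}(N) = (f^{\perp_i})^{-1} \circ f^{\perp_j}$ is smooth by smoothness of composition and inversion on $\Diff(S)$. The only subtle point is verifying that removing the $\sigma$-lift from the uniqueness step of \S\ref{gen_case} really does leave a bijective $\chi$; this is immediate here because once $\partial N \subset N_0^{ext}$ is fixed by $\sigma^\dagger$, the submanifold $N$ itself (being top-dimensional and open) is determined by its boundary together with the connected component of $N_0^{ext} \setminus \partial N$ in which it sits.
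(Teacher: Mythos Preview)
Your proposal is correct and takes essentially the same approach as the paper: the Proposition is presented there as the specialization of Theorem~\ref{thm_general} to $\dim S=\dim M$, with the paper noting exactly the simplifications you spell out (normal bundle of rank zero so $\sigma$ disappears, $T(\partial N_0)^\dagger=T(\partial N_0)^\perp$, the metric $g$ may be chosen arbitrarily, and $N_0^{ext}$ is just an open subset of $M$). Your write-up is in fact more explicit than the paper's, which simply records these degenerations in a short paragraph and then restates the theorem.
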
 

Note that the tangent map to $ \pi $ read $T_f \pi (v _f )= v_N^\dagger\in \Gamma (T( \partial N)^\perp)$, where $v_N:=v _f \circ f ^{-1} $ and $v_N^\dagger$ is the orthogonal projection of $v_N|_{\pa N}$ to the line bundle $T(\pa N)^\perp$. It can also be written as $T_f \pi (v _f )= g( v_N|_{ \partial N}, n)\in C^\infty(\partial N)$, where $g$ is the Riemannian metric on $M$ and $n$ is the unit normal outward pointing vector field to $ \partial N$.
\color{black}


\section{Principal $\operatorname{Diff}(S)$-bundle structure on embeddings that preserve the total volume}\label{s.totvol}

In this section we consider the subset $ \operatorname{Emb}_0(S,M) \subset \operatorname{Emb}(S,M)$ of all embeddings $f:S \rightarrow M$ that preserve the total volume. This set arises as the total space of a $ \operatorname{Diff}(S)$-principal bundle over the nonlinear Grassmannian $\operatorname{Gr}^S_0(M)$ of submanifolds $N$ diffeomorphic to $S$, with same volume with $S$.

We first consider below the case $ \operatorname{dim}S= \operatorname{dim}M$ since it is considerably simpler. For the case $ \operatorname{dim}S < \operatorname{dim}M$ a Riemannian metric is needed on $M$.

\subsection{The case $ \operatorname{dim}S= \operatorname{dim}M$}
 
We fix volume forms $ \mu$ on $S$ and $ \mu_M$ on $M$.

\begin{proposition}
The set of embeddings that preserve the total volume
\[
\operatorname{Emb}_0(S,M) :=\left\{ f \in \operatorname{Emb}(S,M) :\int_{f(S)} \mu_M  =\int_S \mu \right\}  
\]
is a splitting submanifold of $ \operatorname{Emb}(S,M)$, with tangent space
\[
T_f \operatorname{Emb}_0(S,M)= \left\{ v _f \in T_f \operatorname{Emb}(S,M) : \int_{ \partial N} i_{ \partial N} ^\ast \mathbf{i} _{ v _f \circ f ^{-1} } \mu _M = 0\right\}.
\]
\end{proposition}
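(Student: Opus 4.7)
The plan is to realize $\operatorname{Emb}_0(S,M)$ as the preimage of a regular value of a smooth real-valued function on $\operatorname{Emb}(S,M)$, and then invoke the regular value theorem for Fréchet manifolds recalled in the introduction (the complementability of the kernel is automatic because the target $\mathbb{R}$ is finite dimensional).

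Concretely, I would define
\[
\Phi: \operatorname{Emb}(S,M) \rightarrow \mathbb{R}, \qquad \Phi(f) := \int_S f^\ast \mu_M - \int_S \mu,
\]
so that $\operatorname{Emb}_0(S,M) = \Phi^{-1}(0)$. Smoothness of $\Phi$ follows from the smoothness of the pullback map $f \mapsto f^\ast \mu_M$ from $C^\infty(S,M)$ to $\Omega^n(S)$ (a standard Fréchet calculus fact, as in \cite{KrMi97}) composed with integration, which is a continuous linear functional on $\Omega^n(S)$.

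The key step is computing $T_f\Phi$. For $v_f \in T_f\operatorname{Emb}(S,M) = \Gamma(f^\ast TM)$, write $v_N := v_f \circ f^{-1}$, which, thanks to the hypothesis $\dim S = \dim M$ and the fact that $f$ is a diffeomorphism onto $N := f(S)$, is a genuine vector field on $N$ (a section of $TM|_N = TN$). A short computation using a curve $f_t$ with $\partial_t f_t|_{t=0}=v_f$, Cartan's magic formula, and the fact that $\mu_M$ is top-degree (hence closed) gives
\[
T_f\Phi(v_f) = \int_N \mathcal{L}_{v_N}\mu_M = \int_N d\,\mathbf{i}_{v_N}\mu_M = \int_{\partial N} i_{\partial N}^\ast \mathbf{i}_{v_N} \mu_M,
\]
where the last equality is Stokes' theorem on the compact manifold with boundary $N$. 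This immediately produces the claimed tangent space as $\ker T_f \Phi$.

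It remains to check that $T_f\Phi$ is surjective at every $f \in \operatorname{Emb}_0(S,M)$; since the codomain is $\mathbb{R}$, it suffices to exhibit one $v_f$ with $T_f\Phi(v_f)\neq 0$. Pick any smooth vector field $w$ on $M$ whose restriction to $\partial N$ has strictly positive outward normal component (for instance, extend the outward unit normal of $\partial N$ to a vector field on $M$ via a tubular neighborhood and a bump function); then $v_f := w \circ f$ yields $i_{\partial N}^\ast \mathbf{i}_{v_N}\mu_M$ equal to a strictly positive multiple of the induced boundary volume form, hence a nonzero integral. A right inverse of $T_f\Phi$ is then $t \mapsto t v_f / T_f\Phi(v_f)$, verifying the hypotheses of the regular value theorem. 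This gives that $\operatorname{Emb}_0(S,M)$ is a splitting submanifold with the stated tangent space.

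The only mildly delicate point I anticipate is justifying the differentiation under the integral sign and the application of Stokes at the boundary, but both follow from standard arguments because everything is smooth and $N$ is compact; so no genuinely hard step is expected.
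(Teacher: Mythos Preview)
Your proposal is correct and follows essentially the same route as the paper: define the total-volume function $\operatorname{vol}(f)=\int_S f^\ast\mu_M$, compute its derivative via Cartan's formula and Stokes' theorem, and apply the Fr\'echet regular value theorem cited in the introduction. The only difference is that you spell out the surjectivity argument explicitly (by exhibiting a vector field with positive outward normal component along $\partial N$), whereas the paper simply asserts that $\operatorname{vol}$ is a submersion.
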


\begin{proof}
Consider the smooth map
$\operatorname{vol}  :\operatorname{Emb}(S,M) \rightarrow \mathbb{R}$, $\operatorname{vol}  (f):= \int_S f ^\ast \mu_M$
with derivative
\[
\mathbf{d} \operatorname{vol}( f ) \cdot v_ f = \int_S \mathbf{d} f ^\ast \mathbf{i} _{ v_ f\circ f^{-1}} \mu_M 
= \int_{ \partial S} i_{\pa S}^* f ^\ast \mathbf{i} _{ v_ f {\circ f ^{-1}}} \mu_M{=\int_{ \partial N} i_{ \partial N} ^\ast \mathbf{i} _{ v _f \circ f ^{-1} } \mu _M},
\]
where $ i _{ \partial S}: \partial S \rightarrow S$ is the inclusion.
This map is a submersion with values in a finite dimensional manifold,
so the regular value theorem \cite{NW} mentioned in the Introduction
can be applied to the submersion $\vol$ to show that the 
set of embeddings that preserve the total volume $\Emb_0(S,M)=\vol^{-1}(\int_S\mu)$
is a submanifold of $\Emb(S,M)$
of codimension one.
\end{proof}

\medskip
Note that the kernel can be equivalently written
\[
\operatorname{ker} \left( \mathbf{d} \operatorname{vol}( f )\right) = \left\{ v_ f \in T_ f \operatorname{Emb} (S,M)  : i_{ \partial S} ^\ast f ^\ast \mathbf{i} _{ v_ f{\circ f ^{-1}} } \mu_M 
\in \Om_{exact}^{k-1}( \partial S) \right\},
\]
where $k= \operatorname{dim}S= \operatorname{dim}M$. 
When the volume form $ \mu _M$ is associated to a Riemannian metric $g$ on $M$, then we can write $ \mathbf{i} _{ v _f \circ f ^{-1} } \mu _M= g( v _f \circ f ^{-1} , n) \mu _{ \partial N}$, where $n$ is the unit outward pointing normal vector field to $ \partial N$ and $ \mu _{ \partial N}$ is the volume form induced on the boundary. In this case, the tangent space $T_f \operatorname{Emb}_0(S,M)$ consists of vector fields $v _f   \in \Gamma ( f ^\ast TM)$ such that
\[
\int_{ \partial N} g\left(  v _f \circ f ^{-1} |_{ \partial N}, n\right)  \mu _{ \partial N}=0.
\]
\color{black}

In the same way, using the submersion $\vol:\Gr^S(M)\to\RR$, 
$\vol(N)=\int_N\mu_M$, we obtain that the nonlinear Grassmannian of submanifolds of $M$ of same volume as $S$
\begin{equation}\label{defi}
\operatorname{Gr}_0^S(M):=\left \{N\subset M: \text{$N$ submanif. diffeom. to $S$, $\int_N{ \mu _M }= \int_S{ \mu }$}\right \}
\end{equation}
is a codimension one submanifold of $\Gr^S(M)$, with tangent space
\[
T_N \operatorname{Gr}^S_0(M)=\left\{ w _N \in \Gamma (T( \partial N)^\perp): \int_{ \partial N}  i_{\pa N} ^\ast \mathbf{i} _{ w_ N } \mu_M=0\right\}.
\]
Indeed, the derivative is
\[
\mathbf{d} \operatorname{vol}( N) \cdot w_ N 
= \int_N \mathbf{d}i_N^*  \mathbf{i} _{ w_ N} \mu_M  
= \int_{ \partial N}  i_{\pa N} ^\ast \mathbf{i} _{ w_ N } \mu_M,
\]
for all $w_N\in\Ga(TN^\perp)$.

{As above, when $ \mu _M$ is associated to a Riemannian metric $g$ on $M$, a section $w_N\in \Gamma (T( \partial N)^\perp)$ belongs to the tangent space $T_N \operatorname{Gr}^S_0(M)$ if and only if $\int_Ng(w_N,n) \mu _{ \partial N}=0$. The tangent space can thus be identified with $C_0^\infty( \partial N)=\left \{ h \in C^\infty(N, \mathbb{R}  )\mid \int_N h \mu _{ \partial M}=0\right\}$, where $h= g(w_N,n)$, i.e., $w_N=hn$.}

We thus have proved the following theorem.
 
\begin{proposition} The projection
\[
\pi _0 :\operatorname{Emb}_0 (S,M) \rightarrow\operatorname{Gr}^S_0 (M), \quad f \mapsto f(S)
\]
is a Fr\'echet principal $ \operatorname{Diff}(S)$-bundle. 
\end{proposition}

The projection $\pi _0 $ is the restriction of the projection $ \pi $ in \eqref{proj_S=M}, so the expression of its tangent map is the restriction of that of $ \pi $.

\subsection{The case $ \operatorname{dim}S< \operatorname{dim}M$}\label{s<}

We fix a volume form $ \mu$ on $S$ and a Riemannian metric $g$ on $M$
with induced volume form $ \mu(g)$. 
This metric induces on any submanifold $N$ of $M$ a volume form $\mu(g_N)$, where $g _N $ is the Riemannian metric induced on $N$. We denote by $H_N\in\Ga(TN^\perp)$
the trace of the second fundamental form 
$\operatorname{II}_N:TN\x TN\to TN^\perp$
of the submanifold $N$, \ie the mean curvature vector field.

\begin{proposition}
The set of embeddings that preserve the total volume
\[
\operatorname{Emb}_0(S,M) :=\left\{ f \in \operatorname{Emb}(S,M) :\int_{f(S)} \mu(g_{f(S)})  =\int_S \mu \right\}  
\]
is a splitting submanifold of $ \operatorname{Emb}(S,M)$, whose tangent space at $f$ is
\[
\left\{ v _f \in T_f \operatorname{Emb}(S,M) : \int_{ \partial N}g ( v _f \circ f ^{-1} , n) \mu (g_{\partial N})=\int_Ng(H_N, v _f \circ f ^{-1}) \mu (g_N)\right\}.
\]
\end{proposition}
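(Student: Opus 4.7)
The plan is to mimic the argument used in the equidimensional case, applying the Fr\'echet regular value theorem to the total volume functional, with the only real new input being the classical first variation of area formula for a submanifold with boundary in a Riemannian manifold.

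First I would introduce the smooth map
\[
\vol:\Emb(S,M)\to\RR,\qquad \vol(f):=\int_S\mu(f^*g),
\]
which equals $\int_{f(S)}\mu(g_{f(S)})$ since $f:S\to f(S)$ is a diffeomorphism. Smoothness follows from smoothness of the pullback metric construction $f\mapsto f^*g$, the Riemannian volume functional on symmetric $(0,2)$-tensor fields, and integration; all of these are smooth in the Fr\'echet sense by the standard calculus of \cite{KrMi97}. Then $\Emb_0(S,M)=\vol^{-1}\!\bigl(\int_S\mu\bigr)$.

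The key computation is the derivative of $\vol$ at $f$ in the direction $v_f\in T_f\Emb(S,M)=\Gamma(f^*TM)$. Setting $N=f(S)$ and $v_N:=v_f\o f^{-1}\in\Ga(TM|_N)$, and extending $v_f$ to a compactly supported variation $f_t$ of $f$, the first variation of area formula for a submanifold with boundary gives
\[
\dd\vol(f)\cdot v_f=\int_{\pa N}g(v_N,n)\,\mu(g_{\pa N})-\int_N g(H_N,v_N)\,\mu(g_N),
\]
where $H_N$ is the mean curvature vector and $n$ is the outward unit conormal to $\pa N$ inside $N$. I would derive this by splitting $v_N$ into tangent and normal components: the tangent component generates a reparametrization which contributes only a boundary term by Stokes' theorem (analogous to the computation in the equidimensional case), while the normal component produces both the mean curvature term and, after integration by parts on the interior, the remaining boundary contribution.

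Once this formula is in hand, surjectivity of $\dd\vol(f)$ onto $\RR$ is easy: one chooses a bump function $h\geq0$ on $\pa N$ with $\int_{\pa N}h\,\mu(g_{\pa N})$ arbitrary, and extends $h\cdot n$ to a normal vector field on $N$ supported in a collar of $\pa N$ so tightly that the interior term $\int_N g(H_N,v_N)\mu(g_N)$ is dominated by the boundary term; the resulting $v_f$ gives a nonzero value of $\dd\vol(f)$, whose scalar multiples exhaust $\RR$. Since the target is finite dimensional, $\ker\dd\vol(f)$ is automatically complemented, and so the Fr\'echet regular value theorem recalled in the Introduction implies that $\Emb_0(S,M)$ is a splitting submanifold of $\Emb(S,M)$ whose tangent space at $f$ is exactly $\ker\dd\vol(f)$, which is the set displayed in the statement.

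The main obstacle is establishing the first variation formula cleanly in the present Fr\'echet setting, in particular making sure the interchange of $\pa_t$ with the integral of $\mu(f_t^*g)$ and the integration by parts producing the $H_N$ term are justified as operations on smooth maps of Fr\'echet manifolds rather than as a pointwise argument in finite dimensions; once the map $f\mapsto \mu(f^*g)$ is recognized as a smooth map $\Emb(S,M)\to\Om^{\dim S}(S)$, the calculation reduces to standard Riemannian geometry applied pointwise in $t$, and the rest of the argument is a direct transcription of the $\dim S=\dim M$ proof.
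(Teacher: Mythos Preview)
Your approach matches the paper's exactly: define the total volume map $\vol:\Emb(S,M)\to\RR$, invoke the first variation of area formula for its derivative, and apply the Fr\'echet regular value theorem; the paper is even terser, citing the formula from \cite{GaHuLa2004} rather than deriving it and simply asserting that the map is a submersion. One small simplification for your surjectivity argument: since the conormal $n$ is tangent to $N$ while $H_N$ is normal to $N$, extending $h\cdot n$ to a vector field tangent to $N$ gives $g(H_N,v_N)\equiv 0$, so the interior term vanishes identically and no domination estimate is needed.
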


\begin{proof}
Consider the smooth map $\operatorname{vol}  :\operatorname{Emb}(S,M) \rightarrow \mathbb{R}$, $\operatorname{vol}  (f):= \int_S f ^\ast \mu(g_{f(S)} )$
with derivative
\[
\mathbf{d} \operatorname{vol}( f ) \cdot v_ f 
= \int_{ \partial S}  i_{ \partial S} ^\ast f ^\ast \mathbf{i} _{ (v_ f\circ f ^{-1})^\top  } \mu(g_{f(S)})
-\int_S g(H_{f(S)}\o f,v_f)f ^\ast \mu(g_{f(S)}),
\]
see e.g. \cite{GaHuLa2004} (the proof of Theorem 5.20). This is a submersion 
with values in a finite dimensional manifold
for which we can apply 
the regular value theorem \cite{NW} mentioned in the Introduction to show that the 
set of embeddings that preserve the total volume $\Emb_0(S,M)=\vol^{-1}(\int_S\mu)$
is a submanifold of $\Emb(S,M)$
of codimension one.
\end{proof}

\medskip

In the same way, using the submersion $\vol:\Gr^S(M)\to\RR$, $\vol(N)=\int_N\mu(g_N)$,
with derivative
\begin{equation}\label{derivative_vol} 
\mathbf{d} \operatorname{vol}( N ) \cdot (w_{\pa N},w_ N) 
= \int_{ \partial N}{g(w_{ \partial N}, n)\mu(g_{\partial N})}
-\int_Ng(H_{N},w_N)\mu(g_{N}) ,
\end{equation} 
for $(w_{\pa N},w_N)\in T_N\Gr^S(M)=\Ga(T(\pa N)^\dagger)\oplus\Ga(TN^\perp)$, we obtain that the nonlinear Grassmannian
\begin{equation}\label{defi}
\operatorname{Gr}_0^S(M):=\left \{N\subset M: \text{$N$ submanif. diffeom. to $S$, $\int_N{ \mu(g _N) }= \int_S{ \mu }$}\right \}
\end{equation}
of all submanifolds of $M$ of same volume as $S$, 
is a codimension one submanifold of $\Gr^S(M)$. From \eqref{derivative_vol}, the tangent space to $ \operatorname{Gr}^S_0(M)$ reads
\begin{equation}\label{tgt_space_Gr0} 
T_N\operatorname{Gr}^S_0(M)=\left \{(w_{ \partial N}, w_N) :\int_{ \partial N} {g(w_{ \partial N}, n)\mu(g_{\partial N})}=\int_Ng(H_{N},w_N)\mu(g_{N}) \right \}.
\end{equation} 
As in the previous case we get the following result.

\begin{proposition} The projection
\[
\pi _0 :\operatorname{Emb}_0 (S,M) \rightarrow\operatorname{Gr}^S_0 (M), \quad f \mapsto f(S)
\]
is a Fr\'echet principal $ \operatorname{Diff}(S)$-bundle. 
\end{proposition}

The projection $\pi _0 $ is the restriction of the projection $ \pi $ in \eqref{proj_general}, so the expression of its tangent map is the restriction of that of $ \pi $, namely $T_f \pi _0 (v _f )=(  v_N^\dagger, v_N^\top )=(w_{ \partial N}, w _N )\in T_N \operatorname{Gr}^S_0(M)$, see \eqref{tgt_space_Gr0}, where $v_N:= v _f \circ f ^{-1}$.

Note that $V_f \operatorname{Emb}_0(S,M)= V_f \operatorname{Emb}(S,M)$, for all $ f \in \operatorname{Emb}_0(S,M)$, and    
that we consistently have $T_f\operatorname{Emb}_0(S,M)/ V_f \operatorname{Emb}_0(S,M)\simeq T_N \operatorname{Gr}_0^S(M)$. 


\section{Principal $\Diff_{\vol}(S)$-bundle structure on embeddings}\label{s2}

In this section we consider another principal bundle structure on $\operatorname{Emb}(S,M)$, that naturally arises in \cite{GBVi2014} in the context of symplectic reduction applied to the dual pair of momentum maps for the ideal fluid.
{Here we assume that $S$ is compact, possibly with boundary, and $\dim S\le\dim M$.}

\paragraph{Nonlinear Grassmannian of volume submanifolds.} 
Let us assume that $S$ is endowed with a volume form $ \mu $.
This fixes an orientation on $S$ that we use whenever we integrate over $S$.
The \textit{nonlinear Grassmannian of volume submanifolds of type $(S,\mu)$} is
\begin{equation}\label{def_GR_S_mu} 
\Gr^{S,\mu}(M):=\left\{(N,\nu):N\in\Gr^{S}(M),\;\nu\in\Vol(N),\;\int_N\nu=\int_S\mu\right\},
\end{equation} 
where the orientation on $N$ is the one induced by the volume form $\nu$.

We consider the set of volume forms 
\begin{equation}\label{volmu}
\Vol_\mu (S)=\left\{ \rho\in\Vol(S) : \int_S \rho =\pm \int_S \mu \right\},
\end{equation} 
union of two convex connected components.
The fiber of the forgetting map 
\begin{equation}\label{forg}
\pi_1:\Gr^{S,\mu}(M)  \rightarrow  \Gr^S(M), \quad (N, \nu )\mapsto \pi _1 (N, \nu )=N
\end{equation}
is isomorphic to $\Vol_{ \mu }(S)$. Indeed, the fiber of  $\Gr^{S, \mu }(M)$ above $N$ is $ \{ \nu \in \Vol(N):\int_N \nu =\int_S \mu \}$, with the orientation on $N$  induced from $ \nu $. This means that 
if $\nu$ belongs to the fiber, then $-\nu$ is in the fiber too,
so that the fiber is  isomorphic to $\Vol_\mu(S)$.

Next we show that $\Gr^{S,\mu}(M)$ can be identified with the associated bundle 
to the principal $ \operatorname{Diff}(S)$-bundle $\Emb(S,M)\to\Gr^S(M)$
with respect to the natural action of the diffeomorphism group
on $\Vol_\mu(S)$.
This is made precise in the following proposition.

\begin{proposition}\label{fiber_bundle} 
The nonlinear Grassmannian of volume submanifolds of type $(S,\mu)$ 
with the forgetting map
$\pi_1:\Gr^{S,\mu}(M)  \rightarrow  \Gr^S(M)$
is a Fr\'echet fiber bundle with fiber $\Vol_\mu(S)$, that can be identified with the associated bundle
\[
\Emb(S,M)\x_{\Diff(S)}\Vol_\mu(S)\rightarrow \operatorname{Gr}^S(M), 
\quad [f, \rho  ] \mapsto f(S)=N.
\]
In particular, $\Gr^{S,\mu}(M)$ is a Fr\'echet manifold. 
{The connected component of $(N, \nu ) \in \Gr^{S,\mu}(M)$ is modeled on the Fr\'echet vector space $ \Gamma (T(\partial N)^\dagger) \oplus \Gamma ( TN^\perp) \oplus { \Omega ^k _0(N)}$}, where 
\begin{equation}\label{k_form_0} 
\Omega ^k _0(N)  :=  \left \{\sigma  \in  \Omega ^{k}(N):\int_N \sigma  =0\right \}, \quad k= \operatorname{dim}S. 
\end{equation} 
\end{proposition}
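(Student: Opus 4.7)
The plan is to exhibit $\Gr^{S,\mu}(M)$ as the associated bundle of the $\Diff(S)$-principal bundle from Theorem \ref{thm_general}, and then transport the resulting fiber bundle structure. First I would define
\[
\Phi:\Emb(S,M)\x\Vol_\mu(S)\to\Gr^{S,\mu}(M),\quad\Phi(f,\rho):=\bigl(f(S),(f^{-1})^*\rho\bigr),
\]
where $(f^{-1})^*\rho$ is the push-forward of $\rho$ under the diffeomorphism $f:S\to f(S)$. A direct verification shows that $\Phi$ is invariant under the diagonal right $\Diff(S)$-action $(f,\rho)\cdot\ps=(f\o\ps,\ps^*\rho)$, and that the induced map $\overline\Phi:\Emb(S,M)\x_{\Diff(S)}\Vol_\mu(S)\to\Gr^{S,\mu}(M)$ is a bijection. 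Surjectivity is immediate from \eqref{def_GR_S_mu}: given $(N,\nu)$, choose $f\in\Emb(S,M)$ with $f(S)=N$ and set $\rho:=f^*\nu\in\Vol_\mu(S)$ (the sign of the integral being determined by whether $f$ is orientation-preserving from $\mu$ to $\nu$). Injectivity follows from the fiber identification preceding the statement: if $\Phi(f_1,\rho_1)=\Phi(f_2,\rho_2)$, then $f_2=f_1\o\ps$ for some $\ps\in\Diff(S)$, and comparing push-forward volume forms forces $\rho_2=\ps^*\rho_1$.

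Next I would observe that $\Vol_\mu(S)$ is an open subset of the affine Fr\'echet submanifold $\{\rho\in\Om^k(S):\int_S\rho=\pm\int_S\mu\}$ of $\Om^k(S)$, hence a Fr\'echet manifold whose tangent space at every $\rho$ is canonically $\Om^k_0(S)$. The pullback map $\Diff(S)\x\Vol_\mu(S)\to\Vol_\mu(S)$, $(\ps,\rho)\mapsto\ps^*\rho$, is jointly smooth in the Fr\'echet (equivalently, convenient) calculus \cite{KrMi97}. Combining these with Theorem \ref{thm_general}, the standard associated bundle construction yields a Fr\'echet manifold $\Emb(S,M)\x_{\Diff(S)}\Vol_\mu(S)$ together with a smooth fiber bundle projection to $\Gr^S(M)$ with typical fiber $\Vol_\mu(S)$. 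Explicitly, a local section $s:\U\to\Emb(S,M)$ of $\pi$ (provided by the local trivializations from Section \ref{2.1}) induces a local trivialization $\pi_1^{-1}(\U)\to\U\x\Vol_\mu(S)$, $(N,\nu)\mapsto(N,s(N)^*\nu)$, and the transition functions are obtained from those of the principal bundle by composition with the pullback action, and are therefore smooth.

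Finally I would transport this fiber bundle structure to $\Gr^{S,\mu}(M)$ via the bijection $\overline\Phi$, giving the asserted conclusion that $\pi_1$ is a Fr\'echet fiber bundle with fiber $\Vol_\mu(S)$. The model space of the connected component of $(N,\nu)$ arises from the natural splitting: the horizontal direction contributes $T_N\Gr^S(M)=\Ga(T(\pa N)^\dagger)\oplus\Ga(TN^\perp)$ by Theorem \ref{thm_general}, while the vertical direction, i.e.\ the tangent space to the fiber $\{\nu'\in\Vol(N):\int_N\nu'=\int_N\nu\}$ at $\nu$, is exactly $\Om^k_0(N)$. The only nontrivial technical input is the joint smoothness of the pullback action of $\Diff(S)$ on volume forms in the Fr\'echet setting, which is standard in the convenient calculus; every other step is a formal consequence of the associated bundle construction.
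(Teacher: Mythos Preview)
Your proposal is correct and follows essentially the same approach as the paper: the map $\Phi(f,\rho)=(f(S),f_*\rho)$ you write down is precisely the inverse of the paper's fiber bundle isomorphism $(N,\nu)\mapsto[f,f^*\nu]$, and your local trivialization $(N,\nu)\mapsto(N,s(N)^*\nu)$ via a local section $s$ is exactly how the paper obtains $\pi_1^{-1}(\U)\cong\U\times\Vol_\mu(S)$ from the principal bundle charts. Your treatment is in fact slightly more explicit than the paper's about the smoothness of the $\Diff(S)$-action on $\Vol_\mu(S)$ needed for the associated bundle construction, and your handling of the orientation/sign issue in verifying $f^*\nu\in\Vol_\mu(S)$ is correct though terser than the paper's.
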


\begin{proof}
The fiber bundle isomorphism is given by
\begin{equation}\label{biso}
(N,\nu)\in\Gr^{S,\mu}(M)\mapsto [f,f^*\nu]\in \Emb(S,M)\x_{\Diff(S)}\Vol_\mu(S),
\end{equation}
where $f$ is any embedding with image $N$.
We first check that $f^*\nu\in\Vol_\mu(S)$. Indeed,
$\int_Sf^*\nu=\pm\int_{f(S)}\nu$,
since the orientation on $f(S)=N$ is induced by the volume form $\nu$,
which might or might not coincide with that induced by $f_*\mu$
(recall that the integral over $S$ uses the orientation induced by $\mu$). 
But $\int_N\nu=\int_S\mu$, so that $\int_Sf^*\nu=\pm\int_S\mu$.
Then we verify that the map is well defined, \ie
the result doesn't depend on the choice of the embedding with image $N$:
$[f\o\ph,(f\o\ph)^*\nu]=[f\o\ph,\ph^*f^*\nu]=[f,f^*\nu]$
for any $\ph\in\Diff(S)$.

The inverse of \eqref{biso} is given by
$$[f,\rho]\in\Emb(S,M)\x_{\Diff(S)}\Vol_\mu(S)\mapsto (f(S),f_*\rho)\in\Gr^{S,\mu}(M).$$
We have 
$0<\int_{f(S)}f_*\rho=\pm\int_S\rho$
with orientation on $f(S)$ induced by the volume form $f_*\rho$,
which might or might not coincide with that induced by $f_*\mu$.
But $0<\int_S\mu=\pm\int_S\rho$, so $\int_{f(S)}f_*\rho=\int_S\mu$,
which shows that $(f(S),f_*\rho)\in\Gr^{S,\mu}(M)$.
It remains to verify that the map doesn't depend on the choice of the representative
in the associated bundle:
$[f\o\ph,\ph^*\rho]\mapsto((f\o\ph)(S), (f\o\ph)_*\ph^*\rho)=(f(S),f_*\rho)$
for any $\ph\in\Diff(S)$.

Using the principal bundle charts for $ \operatorname{Emb}(S,M)$, it is now standard to build bundle manifold charts for the associated bundle. Indeed, we have the isomorphisms
\[
\pi_1^{-1}(\U)=\pi^{-1}(\U)\x_{\Diff(S)}\Vol_\mu(S)\cong(\U\x\Diff(S))\x_{\Diff(S)}\Vol_\mu(S)
\cong\U\x\Vol_\mu(S).
\]
This shows that $\pi_1:\Gr^{S,\mu}(M)  \rightarrow  \Gr^S(M)$ is a smooth Fr\'echet fiber bundle. {Since $\Gr^{S,\mu}(M)$ is an associated bundle, its model Fr\'echet space is isomorphic to the sum of the model Fr\'echet space of $\Gr^{S}(M)$ and the model Fr\'echet space of $\operatorname{Vol}_ \mu (S)$, i.e., $\Gamma (T(\partial N)^\dagger) \oplus \Gamma ( TN^\perp)$ and $\Omega ^k _0(N)$}. \end{proof}

\paragraph{Decomposition of the group of diffeomorphism.} 
A theorem of Moser ensures,
for a given  volume form $ \mu $ on $S$, 
the existence of a smooth map $B:\Vol_\mu(S)\to \Diff(S)$
defined on the space \eqref{volmu} of volume forms on $S$ with total volume $\int_S \mu $, such that $B(\nu)_*\mu=\nu$.

Using this result one shows that the group of diffeomorphisms of $S$ splits smoothly
as the product of the group of volume preserving diffeomorphisms
and $\Vol_\mu(S)$:
\begin{equation}\label{mose}
\Diff(S)=\Diff_{\vol}(S)\x\Vol_{{ \mu }} (S).
\end{equation}
This result is valid also for manifolds with smooth boundary, see Theorem 5.1 and 8.6 in \cite{EbMa1970}. 
Recall that the diffeomorphism \eqref{mose} reads $\ph\mapsto(\ph_{\vol},\ph_*\mu)=(B(\ph_*\mu)^{-1}\o\ph,\ph_*\mu)$
with inverse $(\ps,\nu)\mapsto B(\nu)\o\ps$.

The version of \eqref{mose} that appears in  \cite{EbMa1970} is
$\Diff_+(S)=\Diff_{\vol}(S)\x\Vol_{{ \mu }} ^+(S)$,
where $\Diff_+(S)$ denotes the group of orientation preserving diffeomorphisms
and $\Vol_{{ \mu }}^+ (S)=\left \{\rho\in\Vol(S):\int_S\rho=\int_S\mu\right\}$. 

Therefore, every diffeomorphism $\ph\in\Diff(S)$ can be transformed into
a volume preserving diffeomorphism
\begin{equation}\label{bvol}
\ph_{\vol}:=B(\ph_*\mu)^{-1}\o\ph\in\Diff_{\vol}(S).
\end{equation} 
There is another possibility to obtain a volume preserving diffeomorphism
out of an ordinary diffeomorphism $\ph$, namely, by composing on the right with $B(\ph^*\mu)$.

\paragraph{Principal $ \operatorname{Diff}_{\vol}(S)$-bundle structure on $\operatorname{Emb}(S,M)$.}
In the same way as $\pi:f\in\Emb(S,M)\mapsto f(S)\in\Gr^{S}(M)$ is a principal $\Diff(S)$-bundle, we now show that the map
\begin{equation}\label{pinu}
\pi_{\vol}:\Emb(S,M)\rightarrow \Gr^{S,\mu}(M), \quad f \mapsto \pi _{\vol}(f)= (f(S),f_*\mu),
\end{equation}
is a principal $\Diff_{\vol}(S)$-bundle. 

In order to prove that $ \pi_{\vol} $ is surjective, given an arbitrary $(N,\nu)\in \Gr^{S,\mu}(M)$, we need to find $f\in \operatorname{Emb}(S,M)$ such that $f(S)=N$ and $f_* \mu= \nu $.
We know that there exists $f'\in \operatorname{Emb}(S,M)$ such that $N=f'(S)$. Let us consider $(f')^*\nu\in \Vol(S)$. Since $\int_S \mu=\int_N \nu=\int_S (f')^*\nu$, there exists $ \varphi =B((f')^*\nu)\in \operatorname{Diff}(S)$ such that $ \varphi _* (f')^*\nu= \mu $ (by Moser's theorem).
Then the requested embedding is $f:= f'\circ \varphi^{-1} $.
On the other hand, if the embeddings $f_1$ and $f_2$ satisfy $\pi_{\vol}(f_1)=\pi_{\vol}(f_2)$,
then there is a diffeomorphism $\ph$ of $S$ such that $f_2=f_1\o \ph$,
while the identity $(f_1)_*\mu=(f_2)_*\mu$ ensures that the diffeomorphism $\ph$ is volume preserving.
For the moment we get a bijection:
\[
\operatorname{Emb}(S,M)/ \operatorname{Diff}_{\vol}(S) \longleftrightarrow   \operatorname{Gr}^{S,\mu}(M), \quad [f] \mapsto (f(S),f_*\mu).
\]

\paragraph{Local trivializations.} They can be built
over the open sets $\V=\pi_1^{-1}(\U)\subset \Gr^{S,\mu}(M)$,
preimages by the forgetting map \eqref{forg}
of the open sets $ \mathcal{U} $ of all submanifolds of $M$ that are images of normal embeddings. 
We take
\[
\Psi:  \pi_{\vol}^{-1}(\mathcal{V}) \rightarrow \mathcal{V}\x\Diff_{\vol}(S), \quad 
\Psi(f)=( \pi_{\vol}(f) , (\psi_f)_{\vol}),
\]
where $ (\psi_f)_{\vol}:=B(\psi_{f*}\mu)^{-1}\circ \psi_f$
with $\psi_f$ from \eqref{psif}.
Its inverse reads
$$\Psi ^{-1} ( (N,\nu) , \ph)= f^\perp \circ B((f^\perp)^*\nu)\circ\ph,$$
where $f^\perp$ is the unique normal embedding such that $f^\perp(S)=N$.
Indeed,
\begin{align*}
\Psi( f^\perp \circ B((f^\perp)^*\nu)\circ\ph)&=((f^\perp(S),f^\perp_*
B((f^\perp)^*\nu)\ph_*\mu),(B((f^\perp)^*\nu)\o\ph)_{\vol})\\
&=((f(S),f^\perp_*B((f^\perp)^*\nu)\mu),B(B((f^\perp)^*\nu)_*\ph_*\mu)^{-1}
\o B((f^\perp)^*\nu)\o\ph)\\
&=((f(S),\nu),\ph)
\end{align*}
and
\begin{align*}
\Psi^{-1}( \pi_{\vol}(f) ,  (\psi_f)_{\vol})=f^\perp\o B((f^\perp)^*\nu)\o B((\psi_f)_*\mu)^{-1}\o\psi_f=f^\perp\o\psi_f=f.
\end{align*}
The transition functions are smooth since they express as:
\[
\phi_{ij}:\V_i\cap\V_j\to\Diff_{\vol}(S),\quad
\phi_{ij}(N,\nu)=(f^{\perp_i})^{-1}\o f^{\perp_j}.
\]

We have thus proved the following result.

\begin{proposition} Let $S$ be a compact manifold, possibly with boundary, 
and let $M$ be a finite dimensional manifold without boundary.
Let $ \mu $ be a volume form on $S$. Then, the projection \eqref{pinu}
\[
\pi _{\vol}: \operatorname{Emb}(S,M) \rightarrow \operatorname{Gr}^{S, \mu}(M)  
\]
is a Fr\'echet principal $ \operatorname{Diff}_{\vol}(S)$-bundle.
\end{proposition}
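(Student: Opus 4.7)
The plan is to show that the explicit local trivializations written down just before the statement actually define a smooth principal bundle atlas, taking as inputs the facts already established earlier in the section: surjectivity and fibers of $\pi_{\vol}$, the manifold structure on $\Gr^{S,\mu}(M)$ from Proposition \ref{fiber_bundle}, the principal $\Diff(S)$-bundle structure of $\pi$, and the smoothness of Moser's map $B:\Vol_\mu(S)\to \Diff(S)$.

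First I would record the algebraic content: surjectivity of $\pi_{\vol}$ follows from Moser's theorem exactly as stated in the paragraph after \eqref{pinu}, and two embeddings have the same image under $\pi_{\vol}$ iff they differ by a volume preserving diffeomorphism on the right, so the orbits of the right $\Diff_{\vol}(S)$-action are precisely the fibers. It remains to check that the action is smooth (which follows from smoothness of composition in $\Emb(S,M)$ and from the fact that $\Diff_{\vol}(S)$ is a Fr\'echet Lie subgroup of $\Diff(S)$ by the decomposition \eqref{mose}), and that the map $\pi_{\vol}$ is a smooth submersion admitting local sections.

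Next I would verify that the explicit trivialization
\[
\Psi: \pi_{\vol}^{-1}(\V)\to \V\x\Diff_{\vol}(S), \quad \Psi(f)=(\pi_{\vol}(f),(\psi_f)_{\vol})
\]
and its claimed inverse $\Psi^{-1}((N,\nu),\varphi)=f^\perp\o B((f^\perp)^*\nu)\o\varphi$ are mutually inverse smooth maps. The two computations are already carried out in the excerpt, so my job is only to justify smoothness: the map $f\mapsto \psi_f$ is smooth by Section \S\ref{s2.1}, the map $\varphi\mapsto \varphi_{\vol}=B(\varphi_*\mu)^{-1}\o\varphi$ is smooth because $B$ is smooth by Moser's theorem and composition/inversion in $\Diff(S)$ is smooth, the map $N\mapsto f^\perp$ is smooth (it is $\chi^{-1}$ composed with the formula \eqref{normal_embedding_boundary}), and $(N,\nu)\mapsto \nu$ is smooth as a component of a chart from Proposition \ref{fiber_bundle}. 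Composition of these smooth operations gives smoothness of $\Psi$ and $\Psi^{-1}$.

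Then I would check that $\Psi$ is $\Diff_{\vol}(S)$-equivariant with respect to the right action on the second factor. For $\varphi\in\Diff_{\vol}(S)$ one has $\psi_{f\o\varphi}=\psi_f\o\varphi$ (from \eqref{psif}), and since $\varphi$ is volume preserving, $(f\o\varphi)_*\mu=f_*\mu$, so $(\psi_{f\o\varphi})_{\vol}=B(\psi_{f*}\mu)^{-1}\o\psi_f\o\varphi=(\psi_f)_{\vol}\o\varphi$; meanwhile $\pi_{\vol}(f\o\varphi)=\pi_{\vol}(f)$. The transition functions take values in $\Diff_{\vol}(S)$ and are smooth as written, since they are just the transition functions $(f^{\perp_i})^{-1}\o f^{\perp_j}$ of the ambient $\Diff(S)$-bundle restricted to fibers where $f_*\mu$ is fixed. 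Together with the surjectivity and orbit identification, these trivializations give the principal $\Diff_{\vol}(S)$-bundle structure.

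The main point requiring care is the smoothness of Moser's decomposition map $B$ in the Fr\'echet setting for manifolds with boundary; once that is taken from \cite{EbMa1970} as cited, every other piece is either already proved in the earlier sections or is a routine composition of smooth maps, so I do not expect a substantial obstacle beyond assembling these ingredients.
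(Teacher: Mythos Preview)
Your proposal is correct and follows the paper's approach exactly: the paper's argument is precisely the material in the paragraphs preceding the proposition (surjectivity via Moser, fiber identification, the explicit trivialization $\Psi$ with its verified inverse, and smooth transition functions), and you have organized and justified these steps, adding an explicit equivariance check that the paper leaves implicit.

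One small caveat on the last step: your reason for the transition functions lying in $\Diff_{\vol}(S)$---that they are ``just the transition functions of the ambient $\Diff(S)$-bundle''---is not quite right, and the paper's displayed formula $\phi_{ij}(N,\nu)=(f^{\perp_i})^{-1}\o f^{\perp_j}$ shares this imprecision. The diffeomorphism $(f^{\perp_i})^{-1}\o f^{\perp_j}$ has no reason to preserve $\mu$. Computing directly from the trivializations $\Psi_i$ gives
\[
\phi_{ij}(N,\nu)=B\bigl((f^{\perp_i})^*\nu\bigr)^{-1}\o(f^{\perp_i})^{-1}\o f^{\perp_j}\o B\bigl((f^{\perp_j})^*\nu\bigr),
\]
which does push $\mu$ to $\mu$ (the successive factors send $\mu\mapsto(f^{\perp_j})^*\nu\mapsto\nu\mapsto(f^{\perp_i})^*\nu\mapsto\mu$) and is smooth in $(N,\nu)$ by smoothness of $B$ and of $N\mapsto f^{\perp_i},f^{\perp_j}$. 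So the conclusion stands, but the justification needs this correction.
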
 

Note that the vertical subspace of $T_f \operatorname{Emb}(S,M)$, relative to the $ \operatorname{Diff}_{\vol}(S)$-principal bundle structure, reads
\begin{equation}\label{vertic}
V_f \operatorname{Emb}(S,M)=\{Tf \circ  u : u \in \mathfrak{X}_{\|}(S, \mu )\}= \{ v \circ f : v \in \mathfrak{X}  _\|( f(S), f _\ast \mu )\},
\end{equation}
where $ \mathfrak{X}  _\|(S, \mu )$ denotes the space of smooth vector fields on $S$ parallel to the boundary and divergence-free relative to $ \mu $. In the equality above, we defined $v:= f _\ast u$. Since $u$ is an arbitrary vector field on $S$ parallel to the boundary and divergence-free relative to $ \mu $, $v$ is an arbitrary vector field on $f(S)$ parallel to the boundary and divergence-free relative to the volume for $ f _\ast \mu $ on $f(S)$.

Let $\Om^{k-1}_n(N)=\{\al\in\Om^{k-1}(N):i_{\pa N}^*\al=0\}$
denote the space of differential $(k-1)$-forms normal to the boundary.
Then the space of $ \Omega ^k _0(N)$ of $k$-forms with zero integral, see \eqref{k_form_0}, can be written as $\Om^k_0(N)=\mathbf{d}\Om^{k-1}_n(N)$ \cite{EbMa1970}. Therefore, the tangent space
$T_{(N,\nu)}\Gr^{S,\mu}(M)$ can be identified
with the product space $\Ga(T(\pa N)^\dagger)\x\Ga(TN^\perp)\x \mathbf{d} \Om_n^{k-1}(N)$, for any $(N,\nu)\in\Gr^{S,\mu}(M)$,
by Proposition \ref{fiber_bundle}.

The next proposition gives another expression for the tangent space $T_{(N,\nu)}\Gr^{S,\mu}(M)$,
which permits to express the tangent map $T_f\pi_{\vol}$ in a simple way.

\begin{proposition}\label{deco}
The tangent space to the nonlinear volume Grassmannian
can be identified with
\[
T_{(N,\nu)}\Gr^{S,\mu}(M)={E_{(N, \nu )}}\x\Ga(TN^\perp)
\subset
\Ga(T(\pa N)^\dagger)\x\mathbf{d}\Om^{k-1}(N)\x\Ga(TN^\perp),
\]
where
\[
{E_{(N, \nu )}}:=\{(w_{\pa N},\dd\al)\in\Ga(T(\pa N)^\dagger)\x\mathbf{d}\Om^{k-1}(N)
:w_{\pa N}\nu_{\pa}=i_{\pa N}^*\al\}.
\]

Let $g$ be a Riemannian metric on $M$.
Given $f\in\Emb(S,M)$ such that 
$f(S)=N$ and $f_*\mu=\nu$,  the tangent map $T_f\pi_{\vol}$ becomes
\begin{gather*}
T_f\pi_{\vol}: T_f\Emb(S,M)\to
T_{(N,\nu)}\Gr^{S,\mu}(M),\quad
T_f\pi_{\vol}(v_f)=\left(v_N^\dagger,\pounds _{v_{N}^\top}\nu,v_{N}^\perp\right).
\end{gather*}
Here $v_N:=v_f\o f^{-1}\in\Ga(TM|_N)$ and $v_N=v_N^\perp+v_N^\top$ denotes 
the orthogonal decomposition along $N\subset M$,
while $v_N^\dagger\in\Ga(T(\pa N)^\dagger)$
denotes the orthogonal projection of $v_N^\top|_{\pa N}$ to
the line bundle $T(\pa N)^\dagger$.
\end{proposition}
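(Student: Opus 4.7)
The plan is to define a candidate linear map $\Phi:T_f\Emb(S,M)\to E_{(N,\nu)}\x\Ga(TN^\perp)$ by the proposed formula $v_f\mapsto(v_N^\dagger,\pounds_{v_N^\top}\nu,v_N^\perp)$ with $v_N:=v_f\o f^{-1}$, and to show that $\Phi$ is surjective with kernel equal to the vertical subspace $V_f\Emb(S,M)$ of the principal $\Diff_{\vol}(S)$-bundle $\pi_{\vol}$ from the preceding proposition. Using that bundle structure, $\Phi$ then descends to an isomorphism $\bar\Phi$ that simultaneously realizes the identification $T_{(N,\nu)}\Gr^{S,\mu}(M)\simeq E_{(N,\nu)}\x\Ga(TN^\perp)$ and the formula for $T_f\pi_{\vol}$.

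First I would verify that $\Phi$ actually lands in $E_{(N,\nu)}\x\Ga(TN^\perp)$. Cartan's magic formula combined with $\dd\nu=0$ yields $\pounds_{v_N^\top}\nu=\dd\al$ for $\al:=\ii_{v_N^\top}\nu\in\Om^{k-1}(N)$, so the middle component is an exact $k$-form. To check the compatibility $w_{\pa N}\nu_\pa=i_{\pa N}^*\al$, I would split $v_N^\top|_{\pa N}=v_N^\dagger+v_N^\parallel$ with $v_N^\parallel\in\X(\pa N)$: the pull-back $i_{\pa N}^*\ii_{v_N^\parallel}\nu$ vanishes by a dimension count, since the resulting $(k-1)$-form on $\pa N$ evaluates $\nu$ on $k$ vectors already lying in the $(k-1)$-dimensional subspace $T(\pa N)$, whereas $\ii_{v_N^\dagger}\nu|_{\pa N}=v_N^\dagger\nu_\pa$ follows from the defining identity $\ii_n\nu|_{\pa N}=\nu_\pa$ of the induced boundary volume form for the outward unit normal $n$.

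For surjectivity, given $((w_{\pa N},\dd\al),w_N^\perp)\in E_{(N,\nu)}\x\Ga(TN^\perp)$, I would invoke the vector bundle isomorphism $TN\to\La^{k-1}T^*N$, $X\mapsto\ii_X\nu$ (valid since $\nu$ is a nowhere-vanishing top form) to produce a unique $X\in\X(N)$ with $\ii_X\nu=\al$; then $\pounds_X\nu=\dd\al$, and the preceding computation applied to $X$ together with the constraint $i_{\pa N}^*\al=w_{\pa N}\nu_\pa$ forces $X^\dagger=w_{\pa N}$, so $v_f:=(X+w_N^\perp)\o f$ is a preimage. For the kernel, $\Phi(v_f)=0$ exactly characterizes $v_N$ as a vector field on $N$ that is parallel to $\pa N$ and divergence-free relative to $\nu=f_*\mu$; by \eqref{vertic} this is precisely $V_f\Emb(S,M)$. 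Hence $\Phi$ factors as $\bar\Phi\o T_f\pi_{\vol}$ for a unique vector space isomorphism $\bar\Phi$, and the manifest $\Diff_{\vol}(S)$-invariance of $v_N$ under right translation $v_f\mapsto v_f\o\ps$ shows that $\bar\Phi$ depends only on $(N,\nu)$, giving the intrinsic identification together with the tangent map formula. The main obstacle will be the sharp dimensional-vanishing of $i_{\pa N}^*\ii_{v_N^\parallel}\nu$, which is what makes the compatibility relation defining $E_{(N,\nu)}$ hold on the nose.
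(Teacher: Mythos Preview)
Your proposal is correct and follows essentially the same route as the paper: define the map $v_f\mapsto(v_N^\dagger,\pounds_{v_N^\top}\nu,v_N^\perp)$, check it lands in $E_{(N,\nu)}\times\Ga(TN^\perp)$ via the identity $i_{\pa N}^*(\ii_{v_N^\top}\nu)=v_N^\dagger\,\nu_\pa$, then verify surjectivity using the isomorphism $X\mapsto\ii_X\nu$ and identify the kernel with the vertical space \eqref{vertic}. Your dimensional-vanishing argument for $i_{\pa N}^*\ii_{v_N^\parallel}\nu=0$ is just a slightly more explicit unpacking of the paper's formula \eqref{formula}, and your remark on $\Diff_{\vol}(S)$-invariance is a useful addition but not a different method.
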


\begin{proof}
Since $\pi_{\vol}:\Emb(S,M)\to\Gr^{S,\mu}(M)$ is a  principal $\Diff_{\vol}(S)$-bundle, 
we have the identification
$$
T_{(N,\nu)}\Gr^{S,\mu}(M)=T_f\Emb(S,M)/V_f\Emb(S,M),
$$ 
where $V_f\Emb(S,M)$ is the vertical space \eqref{vertic} at $f\in\Emb(S,M)$.
Let $N=f(S)$ and $\nu=f_*\mu$.
The map
\begin{equation}\label{fimu}
v_f\in T_f\Emb(S,M)\;\longmapsto\; \left(v_N^\dagger,\pounds _{v_N^\top}\nu, v_N^{\perp}\right) \in {E_{(N, \nu )}}\x\Ga(TN^\perp)
\end{equation}
is well defined because $\pounds _{v_N^\top}\nu= \mathbf{d}( \mathbf{i} _{v_N^{\top}} \nu) $ and the formula
\begin{equation}\label{formula}
i_{\pa N}^*(\mathbf{i}_{v_N^\top}\nu)= g(v_N^\top, n) \nu _{ \partial }= v_N^\dagger\nu _ \partial,
\end{equation}
which shows that $\left(v_N^\dagger,\pounds _{v_N^\top}\nu\right)  \in {E_{(N, \nu )}}$.

We now show it is a linear surjective map with kernel $V_f\Emb(S,M)$.
Let $v_f\in\Ga(f^*TM)$ be in the kernel of \eqref{fimu}, 
so that $v_N^\dagger=0$, $v_N^\perp=0$ and $\pounds _{v_N^\top}\nu=0$.
This means that $v_N=v_N^\top\in\mathfrak{X}_{\|}(N, \nu )$. Its pull-back 
by the diffeomorphism $f:S\to f(S)=N$, denoted by $u$, belongs to $\mathfrak{X}_{\|}(S, \mu )$.
Hence $v_f=Tf\o u\in V_f{\Emb(S,M)}$.

For the surjectivity we consider a triple $(w_{\pa N},\dd\al,w_N^\perp)\in {E_{(N, \nu )}}\x\Ga(TN^\perp)$, so there is a relation between the first two components
$w_{\pa N}\nu_{\pa}=i_{\pa N}^*\al$.
The differential form $\al\in\Om^{k-1}(N)$ 
determines the vector field $w_N^\top\in\X(N)$ by $\mathbf{i}_{w_N^\top}\nu=\al$. 
Now we can define
$v_f:=v_N\o f$, where $v_N:=w_N^\perp+w_N^\top$,
because of the formula \eqref{formula}.
\end{proof}

\begin{remark}{\rm
The two descriptions of the tangent space $T_{(N,\nu)}\Gr^{S,\mu}(M)$
are of course isomorphic.
This can be seen directly as follows. Using tubular neighborhoods as in Section \ref{gen_case},
we assign  to each $w_{ \partial N}\in\Ga(T(\pa N)^\dagger)$
a differential form $\be(w_{ \partial N})\in\Om^{k-1}(N)$ that extends 
$w_{ \partial N}\nu_\pa\in\Om^{k-1}(\pa N)$, i.e., $w_{ \partial N}\nu_\pa= i _{ \partial N} ^\ast \be(w_{ \partial N})$.
The isomorphism reads $(w_{ \partial N},\dd\al)\in E_{(N, \nu )} \mapsto (w_{ \partial N},\dd(\al-\be(h)))\in \Ga(T(\pa N)^\dagger) \times \Omega ^k _0 (N)$
with inverse $(w_{ \partial N},\dd\la)\in\Ga(T(\pa N)^\dagger) \times \Omega ^k _0 (N)\mapsto (w_{ \partial N},\dd(\la+\be(w_{ \partial N})))\in E_{(N, \nu )}$.
}
\end{remark}


\section{Principal $\Diff_{\vol}(S)$-bundle structure on volume preserving embeddings}\label{s5}

\subsection{The case $\dim S=\dim M$}

We recall that in this case we require $\pa S\ne\varnothing $.
We fix volume forms $ \mu $ and $ \mu_M$ on $S$ and $M$, respectively, and we define the set of volume preserving embeddings
$$ 
\operatorname{Emb}_{\vol}(S,M):=\{ f\in \operatorname{Emb}(S,M)\mid f ^\ast \mu_M= \mu \}.
$$
The projection $\pi(f)=f(S)$ restricted to $\Emb_{\vol}(S,M)$
takes values in the nonlinear Grassmannian 
$\operatorname{Gr}_0^S(M)$ of all type $S$ submanifolds of $M$ of same volume as $S$, introduced in \eqref{defi}, since $f(S)$ has the same volume as $S$
for all $f\in\Emb_{\vol}(S,M)$.

Using a pullback construction,
we show below that $\Emb_{\vol}(S,M)$ has a principal $\Diff_{\vol}(S)$-bundle structure over $\Gr_0^S(M)$.

\begin{proposition}
The pullback bundle of the principal bundle 
$\pi_{\vol}:\Emb(S,M)\rightarrow \Gr^{S,\mu}(M)$
\eqref{pinu} with structure group  $\Diff_{\vol}(S)$, via the smooth section
\[
\et:\Gr_0^S(M)\to\Gr^{S,\mu}(M),\quad\et(N)=(N,i_N^*\mu_M)
\]
of the $\Vol_\mu(S)$ bundle $\pi_1:\Gr^{S,\mu}(M)  \rightarrow  \Gr^S(M)$ \eqref{forg},
can be identified with 
$$\pi:\Emb_{\vol}(S,M)\to\Gr^S_0(M).$$
{The tangent space is $T_f \operatorname{Emb}_{\vol}(S,M)=\{v _f \in \Gamma ( f ^\ast TM)\mid \operatorname{div}_{ \mu _M }( v _f \circ f ^{-1} )=0\}$.}  
\end{proposition}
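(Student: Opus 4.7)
The plan is to obtain $\Emb_{\vol}(S,M)$ as the pullback of the Fr\'echet principal $\Diff_{\vol}(S)$-bundle $\pi_{\vol}$ along the section $\et$; this will immediately yield the principal bundle structure, and the tangent space description will fall out of differentiating the defining constraint.

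First, I would verify that $\et$ is a well-defined smooth section of the forgetting map $\pi_1:\Gr^{S,\mu}(M)\to\Gr^S(M)$. For $N\in\Gr_0^S(M)$, the pair $(N,i_N^*\mu_M)$ lies in $\Gr^{S,\mu}(M)$ since, by definition of $\Gr_0^S(M)$, $\int_N i_N^*\mu_M=\int_S\mu$. Smoothness follows from the associated bundle description of $\Gr^{S,\mu}(M)$ in Proposition \ref{fiber_bundle}: in a local trivialization $\pi_1^{-1}(\U)\cong\U\x\Vol_\mu(S)$ coming from a local section $s:\U\to\Emb(S,M)$ of $\pi$, the map $\et$ reads $N\mapsto(N,s(N)^*\mu_M)$, which is smooth since $s$ and the pullback operation $(f,\om)\mapsto f^*\om$ are smooth.

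Second, I would identify $\Emb_{\vol}(S,M)$ with the pullback bundle
\[
\et^*\Emb(S,M)=\{(N,f)\in\Gr_0^S(M)\x\Emb(S,M):\et(N)=\pi_{\vol}(f)\},
\]
which is a Fr\'echet principal $\Diff_{\vol}(S)$-bundle over $\Gr_0^S(M)$ with the structure group acting on the second factor by right composition. The constraint $\et(N)=\pi_{\vol}(f)$ translates to $(N,i_N^*\mu_M)=(f(S),f_*\mu)$, which is equivalent to $N=f(S)$ together with $f^*\mu_M=\mu$, i.e., $f\in\Emb_{\vol}(S,M)$. Hence $(N,f)\mapsto f$ is a $\Diff_{\vol}(S)$-equivariant bijection onto $\Emb_{\vol}(S,M)$, with inverse $f\mapsto(\pi(f),f)$, and transporting the manifold and bundle structures across this bijection yields the claimed Fr\'echet principal $\Diff_{\vol}(S)$-bundle structure, whose projection is the restriction of $\pi$.

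Third, for the tangent space, I would differentiate $f_t^*\mu_M=\mu$ along a curve $f_t\in\Emb_{\vol}(S,M)$ with $f_0=f$ and $\dot f_0=v_f$, obtaining
\[
0=\frac{d}{dt}\bigg|_{t=0}f_t^*\mu_M=f^*\pounds_{v_f\o f^{-1}}\mu_M.
\]
Since $\dim S=\dim M$, the map $f:S\to N=f(S)$ is a diffeomorphism and $v_f\o f^{-1}$ is a smooth vector field on $N$; in particular $f^*$ is injective on top forms, so the condition reduces to $\pounds_{v_f\o f^{-1}}\mu_M=0$ on $N$, i.e., $\div_{\mu_M}(v_f\o f^{-1})=0$. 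The reverse inclusion is automatic since $\Emb_{\vol}(S,M)$ arises as the preimage of a smooth section, so its tangent space is cut out by this single linear condition. The main technical ingredient is the smoothness of $\et$ in step one, which relies on the smoothness of the pullback operation on Fr\'echet spaces of smooth sections and on the smooth local sections of $\pi$ provided in Section \ref{s2.2}; the remaining steps are routine consequences of the pullback construction.
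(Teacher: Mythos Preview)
Your proof is correct and follows the same approach as the paper: the key step is the identification of the pullback bundle $\et^*\Emb(S,M)$ with $\Emb_{\vol}(S,M)$ via the equivalence $\et(N)=\pi_{\vol}(f)\Longleftrightarrow N=f(S)$ and $f^*\mu_M=\mu$, exactly as the paper argues. You supply more detail than the paper does---in particular the smoothness of $\et$ and the derivation of the tangent space by differentiating the constraint $f_t^*\mu_M=\mu$---but these are straightforward elaborations of points the paper leaves implicit rather than a different route.
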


\begin{proof}
The condition $\et(N)=\pi_{\vol}(f)$ is equivalent to $f(S)=N$ and $f^*\mu_M=\mu$.
Hence the pullback bundle
$$
\et^*\Emb(S,M):=\{(N,f)\in\Gr_0^S(M)\x\Emb(S,M):\et(N)=\pi_{\vol}(f)\}
$$ 
can be identified with $\Emb_{\vol}(S,M)$,
while the canonical projection on $\Gr_0^S(M)$ becomes $\pi:f\mapsto f(S)$.
\end{proof}

\medskip

As a consequence we also get a natural Fr\'echet manifold structure on $\Emb_{\vol}(S,M)$.

\subsection{The case $ \operatorname{dim}S< \operatorname{dim}M$}

In addition to a volume form $ \mu $ on $S$ (with or without boundary), we fix a Riemannian metric $g$ on $M$. 
As in Section \ref{s<}, we denote by $ \mu(g)$ the Riemannian volume form on $M$ and by $ \mu(g _N)$ the Riemannian volume form induced by the Riemannian metric $g$ restricted to $N$. In this case we define
\[
\operatorname{Emb}_{\vol}(S,M):=\{ f\in \operatorname{Emb}(S,M)\mid f ^\ast \mu(g_{f(S)})= \mu\}.
\]
and
\[
\et:\Gr_0^S(M)\to\Gr^{S,\mu}(M),\quad\et(N):=(N,\mu(g_N))
\]
so we have
\[
\et^*\Emb(S,M):=\{(N,f)\in\Gr_0^S(M)\x\Emb(S,M):\et(N)=\pi_{\vol}(f)\},
\]
which can be identified with $\operatorname{Emb}_{\vol}(S,M)$. This proves that $\operatorname{Emb}_{\vol}(S,M)$ is a Fr\'echet manifold and the total space of a principal $\Diff_{\vol}(S)$-bundle over $\Gr_0^S(M)$, for $ \operatorname{dim}S< \operatorname{dim}M$ and $S$ with or without boundary.
The tangent space is
\[
T_f \operatorname{Emb}_{\vol}(S,M)=\left\{ v _f \in \Gamma ( f ^\ast TM) :\operatorname{div}_{\mu (g_N)}( v _f \circ f ^{-1} )^\top =g(H_N, v _f \circ f ^{-1}) \right\}.
\] 
We notice that the principal bundle
\[
\pi:\Emb_{\vol}(S,M)\to\Gr^S_0(M)
\]
is a reduction of the structure group of the principal bundle 
$\Emb_0(S,M)\to \Gr^S_0(M)$
from $\Diff(S)$ to the subgroup $\Diff_{\vol}(S)$.

{\footnotesize

\bibliographystyle{new}

\begin{thebibliography}{GBTV}

\bibitem[Balleier and Wurzbacher(2012)]{BW}
Balleier, C. and T. Wurzbacher [2012], On the geometry and quantization of symplectic Howe pairs,
\textit{Math. Z.} {\bf 271}, 577--591.

\bibitem[Binz and Fischer(1981)]{BF}
Binz, E., and H. R. Fischer [1981], The manifold of embeddings of a closed manifold,
with an appendix by P. Michor, in {\it Differential geometric methods
in mathematical physics} (Proc. Internat. Conf., Tech. Univ. Clausthal, 1978), Lecture Notes in Phys. {\bf 139}, Springer, Berlin,
1981, 310--329.

\bibitem[Ebin and Marsden(1970)]{EbMa1970}
Ebin, D.~G. and J.~E. Marsden [1970], Groups of diffeomorphisms and the motion
of an incompressible fluid, {\em Ann. of Math.} \textbf{92}, 102--163.

\bibitem[Gallot, Hulin, and Lafontaine(2004)]{GaHuLa2004}
Gallot, S., D. Hulin, J. Lafontaine [2004], \textit{Riemannian Geometry}, Springer-Verlag - Universitext, third edition.

\bibitem[Gay-Balmaz, Marsden, and Ratiu(2013)]{GBMaRa2012}
Gay-Balmaz, F., J.~E. Marsden, and T.~S. Ratiu [2012], Reduced variational formulations in free boundary continuum mechanics, \textit{J. Nonlin. Sci.}, \textbf{22}(4), 463--497.

\bibitem[Gay-Balmaz and Vizman(2013)]{GBVi2013}
Gay-Balmaz, F. and C. Vizman [2013], 
A dual pair for free boundary fluids, preprint.

\bibitem[Gay-Balmaz and Vizman(2014)]{GBVi2014}
Gay-Balmaz, F. and C. Vizman [2014], 
Isotropic submanifolds and coadjoint orbits of the Hamiltonian group, preprint

\bibitem[Gl\"ockner(2006)]{Gloeckner}
Gloeckner, H. [2006],
Implicit functions from topological vector spaces to Banach spaces,
{\it Israel Journal Math.}, {\bf 155} 205--252.

\bibitem[Haller and Vizman(2004)]{HaVi2004}
Haller, S. and C. Vizman [2004],
Non--linear Grassmannians as coadjoint orbits,
\textit{Math. Ann.} \textbf{329}, 771--785.

\bibitem[Hamilton(1982)]{Ha1982}
Hamilton, R.~S. [1982], The inverse function theorem of Nash and Moser, \textit{Bull. Amer. Math. Soc.}, \textbf{7}(1), 65--222.


\bibitem[Hirsch(1976)]{H76}
Hirsch, M.~W. [1976],
{\it Differential topology},
Graduate Texts in Math. \textbf{33}, Springer.

\bibitem[Kriegl and Michor(1997)]{KrMi97}
Kriegl, A., and P.~W. Michor [1997],
{\it The Convenient Setting of Global Analysis},
{Mathematical Surveys and Monographs} {\bf 53},
American Mathematical Society, Providence, RI.


\bibitem[Lewis et al.(1986)]{LeMaMoRa1986}
Lewis, D., J.~E. Marsden, R. Montgomery, and T.~S. Ratiu [1986], The Hamiltonian structure for dynamic free boundary problems, \textit{Physica D}  \textbf{18}, 391--404.


\bibitem[Michor(1980)]{Michor}
Michor, P.~W. [1980],
{\it Manifolds of Differentiable Mappings},
Shiva Publishing Limited.

\bibitem[Marsden and Weinstein(1983)]{MaWe83}
Marsden, J. E. and A. Weinstein [1983], Coadjoint orbits, vortices, and Clebsch variables for incompressible fluids, \textit{Phys. D} \textbf{7}, 305--323.

\bibitem[Molitor(2008)]{Molitor08}
Molitor, M. [2008], La Grassmannienne non-lin\'eaire comme vari\'et\'e Fr\'ech\'etique homog\`ene,
{\it J. Lie Theory} {\bf 18}, 523--539.

\bibitem[Molitor(2012)]{Molitor12}
Molitor, M. [2012], 
Remarks on the space of volume preserving embeddings,
arXiv:1204.3155.

\bibitem[Neeb and Wagemann(2008)]{NW}
Neeb, K.-H. and F. Wagemann [2008],
Lie group structures on groups of smooth and holomorphic maps on non-compact manifolds,
{\it Geometriae Dedicata} {\bf 134}, 17--60.


\end{thebibliography}
\addcontentsline{toc}{section}{References}

\bigskip

FRAN{C}OIS GAY-BALMAZ, 
{LMD, Ecole Normale Sup\'erieure/CNRS, Paris, France.\\
\texttt{gaybalma@lmd.ens.fr}
\\

CORNELIA VIZMAN, {Department of Mathematics,
West University of Timi\c soara, 
Romania.\\
\texttt{vizman@math.uvt.ro}
}

\end{document}